\documentclass[12pt]{amsart}
\usepackage[utf8]{inputenc}
 \usepackage{xcolor}
\newcommand{\norm}[2]{\left\|#1\right\|_{#2}}
\newcommand{\scalar}[2]{\langle{ #1},{#2} \rangle}

\newcommand{\set}[1]{\left\{#1\right\}}

\newcommand{\lr}[1]{\left(#1\right)}

\newcommand{\real}{\mathbb R}

\newcommand{\domain}{\mathcal D}
\newcommand{\range}{\mathcal R}
\newcommand{\N}{\mathcal N}
\newcommand{\Ad}{A^\dag}
\newcommand{\xast}{x_\ast}
\newcommand{\pr}{\pi_{\overline{\range(A)}}}

\newtheorem{theorem}{Theorem}
\newtheorem{lemma}{Lemma}
\newtheorem{proposition}{Proposition}

\theoremstyle{remark}
\newtheorem{remark}{Remark}
\newtheorem{assumption}{Assumption}

\newtheorem{definition}{Definition}

\title[Regularization of the metric generalized
inverse]{Regularization of the metric generalized inverse in Banach
  spaces and the dichotomy phenomenon}
\author{Peter Math\'e}
\address{Weierstra{\ss} Institute for Applied Analysis and Stochastics, Mohrenstr. 39, 10117 Berlin,  Germany}
\email{peter.mathe@wias-berlin.de}
\author{Bernd Hofmann}
\address{Faculty of Mathematics, Chemnitz University of Technology,
 09107 Chemnitz,  Germany}
\email{hofmannb@mathematik.tu-chemnitz.de}
\date{\today}
\keywords{linear operator equations, ill-posed problems, Banach space, metric projection, metric generalized inverse, regularization, Tikhonov regularization, Landweber iteration, Schulz-Newton iteration}
\subjclass{47A52, second. 47B01, 65J20}
\begin{document}

\begin{abstract}
For a bounded linear operator acting between Banach spaces, its metric generalized inverse is the analog to the prominent Moore-Penrose inverse for operators acting between Hilbert spaces.
This generalized inverse is well-defined for Banach spaces that are strictly convex and reflexive. Previous studies had been restricted to closed range situations, where the metric generalized inverse constitutes a continuous homogeneous mapping. The focus of the present study is on the ill-posed situation in the sense of Nashed, when the governing operator has a non-closed range.
We define and analyze iterative schemes as the Landweber and the Schulz-Newton method, as well as parametric schemes with focus on a specific Tikhonov method. Both types are called regularizations aimed at approximating the metric generalized inverse. As a fundamental feature of such schemes we observe a dichotomy. 
This emphasizes that these schemes, when applied to elements of the domain of the metric generalized inverse, approximate the corresponding best approximate solutions well, whereas the resulting approximations will be asymptotically unbounded if they are applied to elements that do not belong to the domain of the metric generalized inverse.
\end{abstract}
\maketitle
\begin{center}
{\sl In honor of our highly esteemed colleague Professor Zuhair Nashed, on the occasion of his 90th birthday. }
\end{center}   
\section{Introduction}
\label{sec:intro}

We are to consider operator equations
\begin{equation}
  \label{eq:3}
  A x=y,\quad x\in X,
\end{equation}
for a \emph{bounded linear} operator~$A \colon X \to Y$, acting between
\emph{Banach spaces} $X$ and $Y$. Our goal is to `solve' the inverse equation, which means
the reconstruction of $x= A^{-1}y \in X$ from $y\in Y$.
This problem is called \emph{ill-posed in the sense of Hadamard} if the operator~$A$ is
not boundedly invertible. One reason may be that a non-trivial
null-space~$\N(A)\subset X$ is present. The more interesting case is that the
range~$\range(A) \subset Y$ is not closed. Then the problem is called \emph{ill-posed in the sense of Nashed} \cite{Nashed86}.

For operator equations~\eqref{eq:3} in Hilbert spaces $X$ and $Y$ the most prominent single-valued substitute for $A^{-1}$ is the
\emph{Moore-Penrose generalized inverse}~$\Ad\colon \domain(\Ad) \subset Y \to X$.
This operator $\Ad$ is linear in a Hilbert space setting and \emph{bounded (continuous)} if 
the range $\range(A)$ of the operator $A$ is closed, in which case we have $\domain(\Ad)=Y$. Otherwise, for $\range(A) \not=\overline{\range(A)}$
the linear operator $\Ad$ is only densely defined on $Y$ and \emph{unbounded (not continuous)}, which requires
\emph{regularization} for treating the ill-posedness.  This means
that we are to approximate the unbounded map~$\Ad$ by a family of
bounded maps. While in Hilbert spaces the most studied regularization
is of spectral type, see the monograph~\cite{MR1408680}, for problems
in Banach spaces other types of regularization are natural. The most
important one is \emph{Tikhonov regularization} as a variant of variational regularization, see the
monograph~\cite{MR2963507} for details.

If $A: X \to Y$ is a bounded linear mapping between two Banach spaces $X$ and $Y$, different types of (often nonlinear) \emph{generalized inverses} $\Ad$ to $A$ exist. They also suffer from the possibly occurring ill-posedness of equation \eqref{eq:3} in the sense that $\Ad$ fails to be continuous. Then the goal of regularization is to build a family of (linear or nonlinear) continuous maps to approximate the respective generalized inverse $\Ad\colon \domain(\Ad) \subset Y \to X$.

The notion of metric generalized inverse, as an analog to the Moore-Penrose inverse, for operator equations in Banach space
is less studied. In Hilbert space every closed subspace is
complemented, i.e.,\ there is a bounded linear projection onto it.
However, in a Banach space setting, both the closed null-space $\N(A)$ of the bounded linear operator $A$ 
as well as the closure of the range~$\overline{\range(A)}$ may be \emph{uncomplemented},
and we refer to \cite{Flemmingbuch18} and \cite{HofKin25} for consequences of the uncomplementedness. 
A comprehensive account for concepts of generalized inverse mappings is given in~\cite{Nashed76} and \cite{NashedVotruba76}. Still linear projections play a major role.
Here, it is not assumed to have appropriate linear projections. Instead,
metric projections are considered, and our focus is on \emph{metric generalized inverses} $\Ad\colon \domain(\Ad) \subset Y \to X$
defined below in Definition~\ref{def:Ad}.
Given $y\in Y$, and a closed
subspace~$U \subset Y$, we are to find~$\pi_{U}(y)\in U$ which
minimizes the distance~$\norm{y - \pi_{U}(y)}{Y} = \inf_{z\in U}\norm{
y - z}{Y}$. These metric projections  exist whenever the subspace
is a \emph{Chebyshev subspace}, which is weaker than requiring the subspace
to be complemented. Metric projections exist whenever the Banach
space under consideration is reflexive and strictly convex. Therefore, we make the
following global assumption with respect to properties of the Banach spaces $X$ and $Y$. 
\begin{assumption}[Banach space geometry]\label{ass:geometry}
We consider a bounded linear operator $A \colon X \to Y$, where both spaces~$X$ and~$Y$ are \emph{reflexive} and \emph{strictly convex} Banach spaces.
\end{assumption}
Under this assumption, every closed
subspace of such a Banach space is a Chebyshev subspace. We will no further
dwell into this, and refer to the study~\cite{MR2024990},
instead. Specifically we highlight that in this case the (nonlinear, but homogeneous) metric
projections~$\pi_{\overline{\range(A)}}$
and~$\pi_{\N(A)}$ exist and are single-valued. 
The \emph{metric generalized inverse} was defined in \cite{MR2024990}. Here we specify this  as follows.

\begin{definition}
  [metric generalized inverse]\label{def:Ad}
  Under Assumption~\ref{ass:geometry} let \linebreak $\N(A)\subset X$
  denote the closed null-space and~$\range(A) \subset Y$ the range of the bounded linear
  operator $A:X \to Y$ mapping between the Banach spaces $X$ and $Y$. Then $\N(A)$ and the closure~$\overline{\range(A)} $ of the range are Chebyshev
  subspaces of $X$ and $Y$ with metric projections~$\pi_{\N(A)}$
  and~$\pi_{\overline{\range(A)}}$, respectively. There exists a
  homogeneous mapping \linebreak $\Ad\colon\domain(\Ad)\subset Y\to X$ with the
  following properties
  \begin{align}
    A \Ad A  x&= A x,\quad x\in X,\\
    \Ad A \Ad y &= \Ad y,\quad y\in  \domain(\Ad),\\
    \Ad A x &= (I_{X} - \pi_{\N(A)}) x,\quad x\in X,
              \label{it:AMA}\\
    A \Ad y &= \pi_{\overline{\range(A)}}y,\quad y\in \domain(\Ad).\label{it:AAM}
  \end{align}
  This mapping is called the \emph{metric generalized inverse}.
\end{definition}
\begin{remark}\label{rem:domain}
  It is interesting to notice, that historically, in finite dimension, or in Hilbert space, the first two properties were
  initially used in the original study by R. Penrose~\cite{Penrose_1955}, whereas the
  last two properties are due to E.~H.~Moore~\cite{moore1920reciprocal}.
\end{remark}

The construction of the mapping~$\Ad$ can be given explicitly, by using the
decomposition of the space~$X$, namely as
\begin{equation}
  \label{eq:1}
  X = \N(A) + C_{X}(A).
\end{equation}

The right term $C_{X}(A)$ is a closed cone. In general it is not a
linear subspace. Its description is given in Proposition~\ref{prop:Ad} in
Appendix~\ref{sec:fundamentals}.
Here, it is important to notice that the mapping~$A \colon C_X(A) \to Y$
is injective, such that formally~$\lr{A|_{C_{X}(A)}}^{-1}\colon \range(A)
\to X$ exists as a homogeneous single-valued mapping. So we can write
\begin{equation}\label{eq:Ad-construct}
\Ad y = \left[\lr{A|_{C_{X}(A)}}^{-1}\circ \pr\right](y),\ y \in \domain(\Ad).
\end{equation}
Additional details are also given in Proposition~\ref{prop:Ad} of Appendix~\ref{sec:fundamentals}.

Let us look at the
decomposition of the space~$Y$ as
\begin{equation} \label{eq:Ydeco}
Y = \overline{\range(A)} + C_{Y}(A),
\end{equation}
where the part~$C_{Y}(A)\subset Y$ constitutes a closed
cone. Again, further details are given in Appendix~\ref{sec:fundamentals}.
The domain and range of~$\Ad$ attain the forms
\begin{align}
\domain(\Ad) &= \range(A) + C_{Y}(A),
\intertext{and}
\range(\Ad) &= C_X(A).
\end{align}

The seminal paper~\cite{MR2024990} and several follow-up studies focus on
the closed range situation~$\range(A) = \overline{\range(A)}$, where the operator equation~\eqref{eq:3} is well-posed in the sense of Nashed~\cite{Nashed86}, but it still may be ill-posed in the sense of Hadamard if the operator~$A$ is non-injective. In this case, the metric generalized inverse $\Ad$ is defined on the whole space $\domain(\Ad)=Y$, and under weak additional assumptions it is even \emph{continuous}, see ~\cite[Theorem~4.1]{MR2024990}.

However, in the present study attention is drawn to the case of a \emph{non-closed range}, where
$\range(A) \not= \overline{\range(A)}$ and the operator equation \eqref{eq:3} is ill-posed in the sense of Nashed.
As seen from~\eqref{eq:Ad-construct} the mapping $\Ad$ is a homogeneous (in general nonlinear) operator, which is as a composition of a nonlinear (homogeneous) continuous projection operator $\pr$, and the inverse of the  restriction
of the injective operator $A$ to the part $C_{X}(A)\subset X$. This inverse and hence the whole metric generalized inverse $\Ad$ is
\emph{not continuous}, due to the non-closed range of $A$.

\par
In this study we are to approximate the metric generalized inverse by
means of a sequence of mappings, or by a parametric family with focus on Tikhonov regularization,
respectively. The imposed conditions will be called regularization. The present setup is in contrast to solve approximately the ill-posed
operator equation~\eqref{eq:3}, for which the monographs \cite{Scherzerbuch09} and \cite{MR2963507}
yield comprehensive details. 

When, for a given~$y\in Y$, we use a regularizing sequence~$X_{k}y,\,k=1,2,\dots$, 
or a regularizing parametric family $x_\alpha(y),\,\alpha>0$, we 
face two problems. If~$y\in\domain(\Ad)$ then the approximations should converge to the existing best approximate solutions~$\xast(y):= \Ad y$. To this end
 we have to take care that asymptotically~$C_{Y}(A)$ is suppressed. On the other hand, for~$y\not\in\domain(\Ad)$ the approximations cannot converge. Instead, under the made assumptions, they will necessarily `explode' (in norm). We refer to this as the \emph{dichotomy phenomenon}. Such dichotomy cannot be seen for operators~$A$ with a closed range $\range(A)$, because there are no elements $y \notin \domain(\Ad)$. Such dichotomies have been observed for spectral regularization of operators acting between Hilbert spaces, and we highlight~\cite[Thm.~5.2]{MR859375}. Within the context of Tikhonov regularization the unboundedness in norm of the parametric approximations~$x_\alpha(y)$, as~$\alpha\to 0$, for data~$y$ which do not have a minimum norm solution, and hence which are not in~$\domain(\Ad)$, has been proven in~\cite[Prop.~3.2]{Binder94}.

\par

We claim that the approximation of the metric generalized inverse (as a mapping) is
harder than the standard situation of solving (in a stable approximate manner) an ill-posed operator equation~\eqref{eq:3}, 
where noisy data $y^{\delta}$ to an \emph{exact right-hand side} $y \in \range(A)$ with $\|y^\delta-y\|_Y\le \delta$ and noise level $\delta>0$ can be used. For the metric generalized inverse~$\Ad$, noisy data $y^{\delta}=y+\xi$
with $\|\xi\|_Y \le \delta$ may perturb \emph{arbitrary elements} $y \in \domain(\Ad)$ that do not necessarily belong to $\range(A)$.
\smallskip

  The study is arranged as follows. In
  Section~\ref{sec:regularization} we introduce the concept of
  \emph{sequential regularization} to fix the requirements for the
  approximations~$X_{k}\colon Y \to X,\ k=0,1,2,\dotsc\,.$
  Section~\ref{sec:tikhonov-scheme-as}, however, introduces the counterpart of \emph{parametric regularization}.
   For both cases, we highlight that the conal part~$C_{Y}(A)\subset Y$
  will be suppressed, and we exhibit a dichotomy, distinguishing the
  behavior for the approximations, depending on
  whether~$y\in\domain(\Ad)$ or not.
  We complement the analysis with some discussion on the special case of Hilbert spaces in Section~\ref{sec:discussion}. Two appendices complete the paper. On the one hand, Appendix~\ref{sec:fundamentals} provides background information, with links to auxiliary studies. On the other hand, Appendix~\ref{sec:proof} gives a detailed proof of  Theorem~\ref{thm:dichotomyparam}.

\section{Sequential regularization: From Schulz to Landweber}
\label{sec:regularization}

There is vast literature for approximating the inverse of a matrix by
a sequence of mappings, probably starting with G. Schulz in 1933,
see~\cite{https://doi.org/10.1002/zamm.19330130111}, which we recall
in~\S~\ref{sec:second-order-scheme}, below. For a recent survey we refer to~\cite{MR3969100}, and  we mention~\cite{MR5022356}, putting such schemes within the framework for approximation of the (bounded) metric generalized inverse.
To the best of our knowledge, such iterative
schemes were not considered for equations that are ill-posed in the sense of Nashed, except in Hilbert spaces.
\par
Based on the
properties given in the items~\eqref{it:AMA} and~\eqref{it:AAM} of Definition~\ref{def:Ad} we
would require that those properties hold
asymptotically. It will be seen that an additional strengthening is
useful. We know from item~\eqref{it:xast3} of Proposition~\ref{prop:xast} 
in Appendix~\ref{sec:fundamentals} that~$\Ad y =
\Ad \pr y,\ y\in\domain(\Ad)$. For~$y\in\range(A)$ this trivially
holds true. However, taking into account the decomposition of~$Y$
from~\eqref{eq:Ydeco} we aim to require that the part
$C_Y(A)$ is suppressed, at least asymptotically.
Therefore we propose the following
\begin{definition}[sequential regularization]\label{def:seq-reg}
  Given a bounded linear operator~$A\colon X \to Y$, we call a
  sequence~$X_{k}\colon Y \to X,\ k=1,2,\dots,$  of homogeneous mappings a regularization, if for $k \to \infty$
  \begin{align}
   X_{k}y &\longrightarrow \Ad y,\quad y\in \domain(\Ad)\label{it:VkA},\\
  \intertext{and}  A X_{k}y &\longrightarrow  \pr y,\quad y\in Y\label{it:AVk}.
  \end{align}
\end{definition}

\begin{remark}\label{rem:CYA}
For $y\in C_Y(A)$, we have that $\pr y=0 \in \range(A)$ and hence $y \in \domain(A^\dagger)$ as well as $\xast(y)=0$. Consequently, we have~$\Ad y =0$ whenever~$y\in C_Y(A)$. This is analogous to the phenomenon of the Moore-Penrose generalized inverse 
$\Ad: \range(A)+\N(A^*)\subset Y \to X$ in Hilbert spaces, where also $\Ad y=0$ for $y \in \N(A^*)$. 
\end{remark}

The following observation shows that regularizations suppress the
information from $C_Y(A)$:
\begin{proposition}
  \label{kegel-weg}
  Let~$X_k y,\ k=0,1,2,\dots$ be any sequential regularization in the sense of Definition~\ref{def:seq-reg}. 
 For $y\in Y$, 
  we have that \linebreak $X_{k}\lr{y - \pr y} \to 0$ as~$k\to \infty$. Thus the conal part is asymptotically suppressed.
\end{proposition}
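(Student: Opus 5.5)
The argument rests on one observation: for every $y\in Y$, the element $z:=y-\pr y$ lies in the domain $\domain(\Ad)$ and satisfies $\Ad z=0$. Once this is established, property \eqref{it:VkA} of Definition~\ref{def:seq-reg} applied to $z$ gives $X_k z\to\Ad z=0$ as $k\to\infty$, which is the claim.

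\textbf{Step 1: locating $z$ in $C_Y(A)$.} We use the decomposition \eqref{eq:Ydeco}. By construction, as described in Appendix~\ref{sec:fundamentals}, the cone $C_Y(A)$ consists of the elements whose metric projection onto $\overline{\range(A)}$ vanishes. That is, $C_Y(A)=\pi_{\overline{\range(A)}}^{-1}(0)$. The decomposition $y=\pr y+(y-\pr y)$ then exhibits $z=y-\pr y$ as the conal part of $y$.

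\textbf{Step 2: checking $\pr z=0$.} If the appendix does not state Step~1 verbatim, I will verify it directly. By definition of the metric projection, $\norm{y-\pr y}{Y}\le\norm{y-u}{Y}$ for all $u\in\overline{\range(A)}$. Substituting $u=\pr y+w$ with $w\in\overline{\range(A)}$ (which is allowed since $\overline{\range(A)}$ is a linear subspace) gives $\norm{z}{Y}\le\norm{z-w}{Y}$ for all such $w$. Hence $0$ is a best approximation of $z$ in $\overline{\range(A)}$. Uniqueness of best approximations in the Chebyshev subspace $\overline{\range(A)}$ (Assumption~\ref{ass:geometry}) yields $\pr z=0$. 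Therefore $z\in C_Y(A)$.

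\textbf{Step 3: concluding.} By Remark~\ref{rem:CYA}, every element of $C_Y(A)$ belongs to $\domain(\Ad)$ and is mapped to $0$ by $\Ad$. We can also see this directly: $\domain(\Ad)=\range(A)+C_Y(A)$ contains $C_Y(A)$, and $\Ad z=\Ad\pr z=\Ad 0=0$ by homogeneity together with item~\eqref{it:xast3} of Proposition~\ref{prop:xast}. Applying \eqref{it:VkA} to $z$ finishes the proof.

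The only potential obstacle is Step~2, namely the identity $\pr(y-\pr y)=0$ for the nonlinear projection. It follows from the translation argument above and requires only that $\overline{\range(A)}$ is a Chebyshev subspace. Note that property \eqref{it:AVk} is not needed for this statement.
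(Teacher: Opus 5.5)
Your proof is correct and follows the same route as the paper: show that $y-\pr y$ lies in $C_Y(A)\subset\domain(\Ad)$ with $\Ad(y-\pr y)=0$ (Remark~\ref{rem:CYA}), then invoke \eqref{it:VkA}. Your Step~2 (the translation argument giving $\pr(y-\pr y)=0$) makes explicit a fact the paper uses tacitly; combined with the characterization $y\in\domain(\Ad)\iff\pr y\in\range(A)$ from Proposition~\ref{prop:Ad}, it already gives $y-\pr y\in\domain(\Ad)$, so the identification $C_Y(A)=\pi_{\overline{\range(A)}}^{-1}(0)$ asserted in Step~1 is not actually needed.
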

\begin{proof}
 As mentioned in Remark~\ref{rem:CYA}, we have for all $y \in Y$ that $y - \pr y \in C_Y(A) \subset \domain(\Ad)$ and that $x_*(y - \pr y)=0$. From~(\ref{it:VkA}) we deduce further 
  $$
 X_{k}\lr{y - \pr y} \longrightarrow \Ad\lr{y - \pr y} =0.
 $$
 This completes the proof.
\end{proof}

Based on this the following result shall be established.
\begin{theorem}\label{thm:dichotomy}
  Suppose that~$A\colon X\to Y$ is a bounded operator
  in a reflexive strictly convex Banach space~$Y$. Let~$X_{k},\
  k=0,1,\dots,$ be any regularization. \\
  Either~$y\in\domain(\Ad)$, and there is the
  minimum norm solution~$\xast(y)$,
  then we have that
  $$
  X_{k}y  \longrightarrow \xast(y) \quad \mbox{as} \quad k \to \infty;
  $$
  or~$y\not\in\domain(\Ad)$ and then
  $$\norm{X_{k} y}{X}\to\infty \quad \mbox{as} \quad k \to \infty.$$
\end{theorem}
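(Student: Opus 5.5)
The first alternative is immediate: for $y\in\domain(\Ad)$, property \eqref{it:VkA} of Definition~\ref{def:seq-reg} gives $X_k y\to\Ad y=\xast(y)$. So the plan concentrates on the second alternative, which I would prove by contraposition. Assume $\norm{X_k y}{X}\not\to\infty$; I will show that then $y\in\domain(\Ad)$.

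Under this assumption there is a subsequence $k_j$ with $\sup_j\norm{X_{k_j}y}{X}<\infty$. Since $X$ is reflexive, I pass to a further subsequence (still denoted $k_j$) such that $X_{k_j}y\rightharpoonup \bar x$ weakly in $X$. The operator $A$ is bounded and linear, hence weak-to-weak continuous, so $AX_{k_j}y\rightharpoonup A\bar x$ weakly in $Y$. On the other hand, \eqref{it:AVk} gives $AX_{k_j}y\to\pr y$ strongly, hence also weakly. Weak limits are unique, so $A\bar x=\pr y$, i.e.\ $\pr y\in\range(A)$.

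It then remains to turn $\pr y\in\range(A)$ into $y\in\domain(\Ad)$. I would use the decomposition \eqref{eq:Ydeco}, $y=\pr y+(y-\pr y)$, where $y-\pr y\in C_Y(A)$ as used in the proof of Proposition~\ref{kegel-weg}. Hence $y\in\range(A)+C_Y(A)=\domain(\Ad)$, which contradicts $y\notin\domain(\Ad)$. Thus every subsequence of $\norm{X_ky}{X}$ is unbounded, which is exactly $\norm{X_k y}{X}\to\infty$. If needed, I would instead cite the characterization from Appendix~\ref{sec:fundamentals} that $y\in\domain(\Ad)$ if and only if $\pr y\in\range(A)$.

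The main obstacle is the bookkeeping in the contrapositive: it has to rule out every bounded subsequence, not just the case where the whole sequence is bounded. The nonlinearity of the homogeneous maps $X_k$ causes no difficulty, because only linearity and continuity of $A$ enter the weak-limit argument. Note also that the limit point $\bar x$ need not equal $\xast(y)$; this does not matter, since any preimage of $\pr y$ suffices.
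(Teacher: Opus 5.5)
Your proof is correct and follows the paper's argument. You take a bounded (sub)sequence, use weak compactness in the reflexive space $X$, and combine weak-to-weak continuity of $A$ with \eqref{it:AVk} and uniqueness of weak limits to get $\pr y\in\range(A)$; this contradicts $y\notin\domain(\Ad)$ by Proposition~\ref{prop:Ad}. Your explicit passage to a bounded subsequence is slightly more careful than the paper: it assumes the whole sequence is bounded, and so strictly only shows $\sup_k\norm{X_ky}{X}=\infty$ rather than $\norm{X_ky}{X}\to\infty$.
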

\begin{proof}
  The part concerning $y\in\domain(\Ad)$ coincides exactly with the
  requirement~(\ref{it:VkA}), because~$\xast(y) = \Ad y$ whenever~$y\in\domain(\Ad)$.
\par
Now, we assume contrarily that the sequence~$X_{k}y,\ k=1,2,\dots,$ is uniformly norm bounded. Since this space~$X$ is assumed to be reflexive, it has a weakly convergent
subsequence by the Eberlein-Shmulyan Theorem. Without loss of generality, we assume the weak convergence $X_{k}y \rightharpoonup z\in
X$ in the Banach space $X$. Since the operator~$A$ is weak-to-weak continuous, we find that~$A
X_{k}y \rightharpoonup Az$, weakly in $Y$. But the left-hand side converges by the regularizing property~(\ref{it:AVk}) to $\pi_{\overline{\range(A)}}y$, which is not in $\range(A)$ for $y\not\in\domain(\Ad)$. Since the weak limit is
unique, we deduce that~$Az=\pi_{\overline{\range(A)}}y$. This contradicts
the assumption for the second assertion, and thus completes the proof of the theorem.
\end{proof}
To continue, we look at the requirements~(\ref{it:VkA})
and~(\ref{it:AVk}).
Specifically, for~$y\in \range(A)$ the convergence in~(\ref{it:VkA})
is towards~$\Ad A \xast = \lr{I_{X} - \pi_{\N(A)}}\xast$.
Thus, we consider the mappings
\begin{align}
S_{k}x&\colon = \lr{I_{X} - \pi_{\N(A)} -
          X_{k}A}x,\quad x\in X,\\
  \intertext{and}
  R_{k}y&\colon =
\pi_{\overline{\range(A)}}y - A X_{k}y,\quad y\in Y.
\end{align}
We state the following properties.
\begin{lemma}\label{lem:SkRk}
The following holds true.
\begin{enumerate}
    \item\label{it:spi} We have that~$S_k \pi_{\N(A)} x =0$,
    \item\label{it:Rkrange} $R_ky \in\overline{\range(A)}$ for~$y\in Y$, and
    \item\label{it:ASSR} we have the identity~$A S_k x = R_k A x,\ x\in X$.
\end{enumerate}
\end{lemma}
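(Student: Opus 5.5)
The plan is to verify the three items directly from the definitions of $S_k$ and $R_k$, using homogeneity of $X_k$ and the properties of the metric projections collected in Definition~\ref{def:Ad} and Appendix~\ref{sec:fundamentals}. No limiting argument is needed; everything is algebraic.

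For item~\eqref{it:spi}, set $n:=\pi_{\N(A)}x\in\N(A)$. Since the metric projection onto a Chebyshev subspace fixes elements of that subspace, $\pi_{\N(A)}n=n$, so $(I_X-\pi_{\N(A)})n=0$. Moreover $An=0$, and homogeneity of $X_k$ gives $X_k0=0$, so $X_kAn=0$. Hence $S_k\pi_{\N(A)}x = n-n-0=0$.

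For item~\eqref{it:Rkrange}, note that $\pr y\in\overline{\range(A)}$ by definition of the metric projection, and $AX_ky\in\range(A)\subset\overline{\range(A)}$. Since $\overline{\range(A)}$ is a linear subspace, the difference $R_ky$ lies in it.

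For item~\eqref{it:ASSR}, which is the only step needing care, we compute both sides. First,
\begin{equation*}
AS_kx = Ax - A\pi_{\N(A)}x - AX_kAx = Ax - AX_kAx,
\end{equation*}
because $\pi_{\N(A)}x\in\N(A)$; here $A$ is linear, so it distributes over the difference even though the other maps are nonlinear. Second, $R_kAx=\pr Ax - AX_kAx$. Since $Ax\in\range(A)\subset\overline{\range(A)}$, the projection fixes it, so $\pr Ax=Ax$, and both expressions coincide. The only potential obstacle is the nonlinearity of $\pi_{\N(A)}$, $\pr$ and $X_k$. It causes no trouble because we only use that metric projections fix their subspaces and that $A$ itself is linear.
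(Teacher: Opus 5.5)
Your proof is correct and matches the paper's: the paper calls items~1 and~2 immediate, and proves item~3 by exactly your chain $AS_kx = Ax - AX_kAx = \pr Ax - AX_kAx = R_kAx$. Your explicit details for the first two items (the metric projection fixes its own subspace, and $X_k0=0$ by homogeneity) fill in what the paper leaves to the reader.
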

\begin{proof}
The first two items are seen, immediately.
    Finally, 
\begin{align*}
A S_kx &= A\lr{x - \pi_{\N(A)}x} - A X_k A x
= A x - A X_k A x\\
&= \pr A x - A X_k Ax  = R_k A x,\quad x\in X.    
\end{align*}
The proof is complete.
\end{proof}

We are to find conditions on the
sequence~$X_{k},\ k=0,1,2,\dots$ that allow for the sequences~$R_{k}, k=1,2,\dots$ and~$S_{k}, k=1,2,\dots$ to tend to zero.
\par

Before continuing, we emphasize the following observation. Even if we guarantee that the iterations~$X_k$ map into the conal part $C_X(A)=J_X\lr{\N(A)^\perp}$, then the misfit~$S_k$ might not tend to zero. This is the case, because this conal part is not a  linear subspace, hence having~$\lr{I_X - \pi_{\N(A)}}x\in C_X(A) $, and~$X_k A x\in C_X(A)$ will not guarantee that~$S_k x \in C_X(A)$, unless the conal part constitutes a linear subspace. The latter is the case if either the operator~$A$ is injective, or the space~$X$ is a Hilbert space.
Therefore, when considering sequential regularization we confine to the case that the projection~$x \to \pi_{\N(A)}x$ is linear, covering both cases mentioned before. Under this assumption we have that~$\pi_{\N(A)} S_k =0$, provided that~$\pi_{\N(A)} X_k y=0$.

The convergence
analysis will be based on two facts. First, 
a certain invariance property of the initial guess~$X_0$ is assumed. Then, we require a smallness
condition for~$k=0$. In this case it will be seen that there is a contraction
as~$k$ grows.
\begin{assumption}[Smallness condition]\label{ass:smallness}
  Suppose that~$X_{0}$ is such that
  \begin{enumerate}
  \item\label{it:invariance}
    \begin{enumerate}
    \item  we have that~$\pi_{\N(A)}X_{0}y=0,\ y\in Y$, and
    \item $X_{0}\pi_{\overline{\range(A)}}y=X_{0}y,\ y\in Y$.
    \end{enumerate}
  \item \label{it:small} For each~$x\in X,\ \norm{x}{X}\leq 1$, and~$y\in Y,\ \norm{y}{Y}\leq 1$, there is~$0< q <1$, such
    that
    \begin{align}
    \norm{\lr{I_{X} -\pi_{\N(A)} + X_{0}A}x}{X} &\leq q\label{eq:6}\\
     \intertext{and}
\norm{\lr{\pi_{\overline{\range(A)}} - A X_{0}}y}{Y}&\leq q.
   \end{align}
  \end{enumerate}
\end{assumption}
  \begin{remark}
  The first two items are trivially fulfilled whenever the
  operator~$A$ is injective and has a dense range, because in such a case
$\pi_{\N(A)}x=0$ for $x\in X$ and $\pi_{\overline{\range(A)}}y=y$ for $y \in Y$.
\end{remark}
Next we  will consider specific sequences~$X_{k},\ k=1,2,\dots$. Again, we mention
the recent study~\cite{MR5022356}, where results, similar to the ones
below are given. However, proved for operators with closed range, and
for \emph{quasi-additive} mappings~$\Ad$. The latter holds true
exactly when the metric projection~$\pi_{\N(A)}$ is linear.
\subsection{The Schulz-Newton method}
\label{sec:second-order-scheme}

The following scheme is classical and had been published by G.~Schulz in 1933,
\cite{https://doi.org/10.1002/zamm.19330130111} for invertible
matrices (with trivial null space) in the form~$X_{k+1} = X_{k}(2I_Y - A X_{k}),\ k=0,1,\dots$.
This translates to
\begin{equation}
  \label{eq:schulz}
  X_{k+1}y := X_{k}y + X_kR_k y,\quad y\in Y.
\end{equation}
The following lemma holds true.
\begin{lemma}
  \label{lem:Vk2}
  Suppose that the projection~$\pi_{\N(A)}$ is linear.
  Under Assumption~\ref{ass:smallness}  the following holds true
   \begin{enumerate}
   \item\label{it:pnV} We have, for all $k \in \mathbb{N}$, that $\pi_{\N(A)}X_{k}y=0,\ y\in Y$; and
   \item\label{it:Vpr} for all $k \in \mathbb{N}$ that $X_{k}\pr y = X_{k}y,\ y\in Y$.
        \item\label{it:k+1} 
   Moreover, $S_{k+1} = S_{k}^{2}$ and $R_{k+1}= R_{k}^{2},\,  k=0,1,2,\dots\,.$\label{it:order2}
   \end{enumerate}
 \end{lemma}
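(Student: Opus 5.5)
Induction on $k$ for items (1) and (2) together, then algebraic identities for item (3). The case $k=0$ of items (1) and (2) is exactly Assumption~\ref{ass:smallness}\eqref{it:invariance}.

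\emph{Step for (1).} Assume (1) and (2) hold for $X_k$. By \eqref{eq:schulz}, $X_{k+1}y = X_k y + X_k R_k y$. Since $\pi_{\N(A)}$ is linear, $\pi_{\N(A)}X_{k+1}y = \pi_{\N(A)}X_k y + \pi_{\N(A)}X_k(R_k y) = 0$.

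\emph{Step for (2).} Here $\pr$ is homogeneous and idempotent, but not necessarily linear. Since $\pr\pr y = \pr y$, we get $R_k\pr y = \pr y - AX_k\pr y = \pr y - AX_k y = R_k y$, using $X_k\pr = X_k$. Hence
\[
X_{k+1}\pr y = X_k\pr y + X_k R_k\pr y = X_k y + X_k R_k y = X_{k+1}y.
\]

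\emph{Step for (3), the $R$-recursion.} Compute
\[
R_{k+1}y = \pr y - AX_k y - AX_k R_k y = R_k y - AX_k R_k y .
\]
Set $z = R_k y$. By Lemma~\ref{lem:SkRk}\eqref{it:Rkrange}, $z\in\overline{\range(A)}$, so $\pr z = z$. Therefore $R_k z = z - AX_k z$, which is exactly the expression above, giving $R_{k+1} = R_k^2$ as composition.

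\emph{Step for (3), the $S$-recursion.}
\[
S_{k+1}x = (I_X-\pi_{\N(A)})x - X_kAx - X_kR_kAx = S_k x - X_kR_kAx .
\]
By Lemma~\ref{lem:SkRk}\eqref{it:ASSR}, $R_kAx = AS_kx$, so $S_{k+1}x = S_kx - X_kAS_kx$. With $w = S_k x$, compare with $S_k w = w - \pi_{\N(A)}w - X_kAw$. It therefore suffices to show $\pi_{\N(A)}S_kx = 0$. By linearity,
\[
\pi_{\N(A)}S_kx = \pi_{\N(A)}x - \pi_{\N(A)}^2x - \pi_{\N(A)}X_kAx = 0,
\]
using item (1) and idempotence. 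Hence $S_{k+1} = S_k^2$.

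\emph{Main obstacle.} The delicate point is that $\pr$ and the $X_k$ are only homogeneous, so no linearity may be used except for $\pi_{\N(A)}$. I would check each manipulation above accordingly. The $R$-identity needs only $\pr z = z$ on $\overline{\range(A)}$ and the definition of $X_{k+1}$ applied pointwise. Additive splitting of $X_{k+1}$ is by definition, never by linearity of $X_k$. The $S$-identity likewise needs linearity only of $\pi_{\N(A)}$.
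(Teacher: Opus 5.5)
Your proof is correct and follows essentially the same route as the paper. Items (1)--(2) go by induction from Assumption~\ref{ass:smallness}\eqref{it:invariance}, using linearity of $\pi_{\N(A)}$ and the idempotence of $\pr$. Item (3) uses the same chain $S_{k+1}x = S_kx - X_kAS_kx = S_k^2x$ via $R_kA = AS_k$ and $\pi_{\N(A)}S_k = 0$. You also write out the $R$-recursion, which the paper only calls ``similar'', via $\pr R_k y = R_k y$; as you implicitly observe, that part needs neither linearity of $\pi_{\N(A)}$ nor the smallness assumption.
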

 \begin{proof}
     To verify~\eqref{it:pnV} and~\eqref{it:Vpr} we use induction. The assertion is true
   for~$k=0$. Suppose that it holds true for some~$k>0$. 
  Under the hypothesis for~$k$ we have that~$X_k y = X_k\pr y$, which in turn yields that~$R_ky = R_k \pr y$. 
  This implies
  $$
   X_{k+1}\pr y = X_k\pr y + X_kR_k\pr y  = X_{k+1} y.
  $$
  Similar reasoning applies to~$\pi_{\N(A)}X_ky$, provided that~$\pi_{\N(A)}$ is linear. This in turn yields~$\pi_{\N(A)} S_kx=0,\ x\in X$.
   \par
   For the assertion in item~\eqref{it:order2} we use the definition of~$S_k x$, making use of item~\eqref{it:ASSR} of Lemma~\ref{lem:SkRk}, that
   \begin{align*}
     S_{k+1}x &= x  - \pi_{\N(A)}x - X_{k+1}Ax\\
             &= \lr{x - \pi_{\N(A)}x  } - \lr{X_{k} + X_{k}R_k}Ax\\
             &=  \lr{x - \pi_{\N(A)}x -X_{k}Ax} - X_{k}R_kAx\\
             &=  \lr{x - \pi_{\N(A)}x -X_{k}Ax} - X_{k}A S_kx\\
     &= S_{k}x - X_{k}AS_k x\\
     &= S_{k}x  - \pi_{\N(A)}S_k x - X_{k}AS_kx\\
     &= S_{k}^{2}x.
   \end{align*}
   Again, the proof for~$R_{k+1}$ is similar.
 \end{proof}

\subsection{Landweber iteration: From Hilbert space to Banach space}
\label{sec:relat-spectr-regul}

The metric generalized inverse in Banach spaces is the extension of the Moore--Penrose
generalized inverse in Hilbert spaces. This means that the metric generalized inverse~$\Ad$ coincides
with the Moore-Penrose inverse when~$X$ and~$Y$ are Hilbert spaces,
because the defining properties are identical. Regularization is
required whenever the operator equation~(\ref{eq:3}) is
ill-posed. Besides spectral regularization, as for instance Tikhonov
regularization, iterative schemes gained interest, see the
monograph~\cite{MR1408680} for details. The monograph~\cite{MR859375}
is devoted to a systematic study of such
schemes, and Theorem~\ref{thm:dichotomy} above mimics Theorem~5.2 ibid.
\par
It seems interesting to discuss the relation of those iterative
schemes in Hilbert space with those considered here. To be specific we
consider Landweber iteration for an operator~$T\colon H \to G$ acting
between Hilbert spaces. Then the iterates, starting with some
initial~$x_{0}$, and data~$y\in G$, as
\begin{equation}
  \label{eq:landweber}
  x_{n+1} := x_{n} + \mu T^{\ast}\lr{y - T x_{n}},\quad n=0,1,\dots
\end{equation}
We readily see that the residuals~$y - T x_{n}$ play a prominent
role. This is in accordance with the mappings~$R_{k}$ from
above, see the second order scheme, studied before. However, and this seems crucial, the adjoint
operator~$T^{\ast}\colon G =G^{\ast} \to H=H^{\ast}$ is used to
map the residual information to the space~$H$. In order to extend such
iteration to a Banach space setting the duality mappings are used, and
a corresponding iterative scheme was developed and analyzed
in~\cite[Chapt.~6]{MR2963507}.
Within the present context of strictly convex and reflexive
  Banach spaces the dual mapping is continuous and single-valued. Therefore, we propose to replace the adjoint operator~$T^{\ast}$ of
  the Hilbert space setting, using the dual 
  operator~$A^{\prime}\colon Y^{\prime} \to X^{\prime}$ of the
  present~$A \colon X \to Y$, by
\begin{equation}
  \label{eq:9}
  B:= \lr{J_{X}}^{-1} A^{\prime} J_{Y}\colon Y \to X,
\end{equation}
in the Banach space setting. Above, the spaces~$X^\prime$ and~$Y^\prime$ denote the dual spaces to~$X$ and~$Y$, respectively. Then the version of the Landweber
iteration from~(\ref{eq:landweber}) reads as
\begin{equation}
  \label{eq:LandweberB}
  X_{k+1}y := X_{k}y + \mu B\lr{y - A {X_{k}y}} = X_{x}y + \mu B R_{k}y.
\end{equation}
\begin{remark}  
Comparing this iterative procedure with the Schulz-Newton method from~\eqref{eq:schulz}, started with~$X_0y = \mu B y$, we recognize the following: Landweber iteration updates the residuals~$R_ky$ always with the initial~$X_0:=\mu B$, whereas the Schulz-Newton method uses the current~$X_k$, instead.   This may lead to an improved performance of the Schulz-Newton method, and this will be manifested in Lemma~\ref{lem:landweber}, below. 
\end{remark}
Therefore, it makes sense to investigate the relevant properties of the mapping~$B$.
\begin{lemma}\label{lem:B}
  Let~$B$ be as in~(\ref{eq:9}). The following holds true.
  \begin{enumerate}
  \item\label{it:Bpi} We have that~$\pi_{\N(A)}B y=0,\ y\in Y$,
    \item \label{it:piB}$B \pr y = B y,\ y\in Y$.
  \item An element~$y\in Y$ fulfills~$A B y=0$ if and only if~$\pr y=0$.
  \end{enumerate}
\end{lemma}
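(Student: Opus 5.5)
The three items rest on the standard characterization of metric projections through duality mappings in reflexive, strictly convex spaces, which is what the paper collects in Proposition~\ref{prop:Ad}. First, for a closed subspace $U\subset X$, $\pi_U x=0$ if and only if $J_X(x)\in U^\perp$, i.e., $\langle J_X x,u\rangle=0$ for all $u\in U$. Second, for a closed subspace $V\subset Y$ we have $y-\pi_V y\in C_Y$, where this conal part is characterized by $J_Y(y-\pi_V y)\in V^\perp$. Third, $J_X$ is a bijection between $X$ and $X^{\prime}$ because of reflexivity and strict convexity; smoothness of the dual makes $J_X^{-1}$ single valued. We will also use $\overline{\range(A)}^\perp=\N(A^{\prime})$ and $\N(A)^\perp\supseteq \range(A^{\prime})$.

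For item~\eqref{it:Bpi}, set $x=By$, so that $J_X x = A^{\prime}J_Y y\in\range(A^{\prime})\subset\N(A)^\perp$. By the first characterization, with $U=\N(A)$, this gives $\pi_{\N(A)}x=0$. Equivalently, $By\in J_X^{-1}(\N(A)^\perp)=C_X(A)$, which matches the description $C_X(A)=J_X(\N(A)^\perp)$ quoted in the text, read with the appropriate inverse.

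Item~\eqref{it:piB} is the main step, because $J_Y$ is nonlinear and we cannot just split $y=\pr y+(y-\pr y)$ inside $J_Y$. I will use the variational characterization of the metric projection instead. Write $z=\pr y$, which minimizes $\norm{y-v}{Y}$ over $v\in\overline{\range(A)}$. Then $y-z\in C_Y(A)$, and this is characterized by $J_Y(y-z)\in \overline{\range(A)}^\perp=\N(A^{\prime})$. From this alone, however, we only get $A^{\prime}J_Y(y-z)=0$, and not $A^{\prime}J_Y y=A^{\prime}J_Y z$. So I will first try to show that the functional $A^{\prime}J_Y y$ depends on $y$ only through $z$. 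Concretely, I would test against $x\in X$, writing $\langle A^{\prime}J_Y y,x\rangle=\langle J_Y y,Ax\rangle$, and try to compare this with $\langle J_Y z, Ax\rangle$ by means of the best-approximation inequalities. If this fails in general, the fallback is to read $B\pr y=By$ as a statement relying on the structure $Y=\overline{\range(A)}+C_Y(A)$ from Proposition~\ref{prop:Ad}. In the Hilbert case the identity holds trivially by linearity, and that case serves as a sanity check. I expect this item to be the genuine obstacle, and the argument may need the explicit description of $C_Y(A)$ from the appendix.

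The third item, $ABy=0\iff \pr y=0$, follows once we establish the chain $ABy=0\iff By=0\iff A^{\prime}J_Yy=0\iff J_Yy\in\N(A^{\prime})=\overline{\range(A)}^\perp\iff \pr y=0$. The first equivalence holds because $By\in C_X(A)$ by item~\eqref{it:Bpi}, and $A$ is injective on $C_X(A)$. The second uses that $J_X^{-1}$ is a bijection with $J_X^{-1}(0)=0$. The last uses the second characterization with $V=\overline{\range(A)}$: $\pr y=0$ exactly when $y\in C_Y(A)$, i.e., $J_Y y\perp\overline{\range(A)}$.
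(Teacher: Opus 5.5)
Your arguments for items~(1) and~(3) are correct. Item~(1) is exactly the paper's range inclusion $J_X(By)=A'J_Yy\in\range(A')\subset\N(A)^\perp$, i.e.\ $\range(B)\subset C_X(A)$; note that the paper's proof has the labels of items~(1) and~(2) interchanged. In item~(3) you obtain $ABy=0\Rightarrow By=0$ from $By\in C_X(A)$ and the injectivity of $A$ on $C_X(A)$. The paper instead computes $\norm{z}{X}^2=\langle A'J_Yy,z\rangle=\langle J_Yy,Az\rangle=0$ for $z=By$ directly. This is the same fact, since that computation is precisely what shows $\N(A)\cap C_X(A)=\set{0}$.

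The genuine gap is item~(2): you give no proof. The fallback you mention, invoking $Y=\overline{\range(A)}+C_Y(A)$, only yields $B(y-\pr y)=0$, i.e.\ that $B$ vanishes on $C_Y(A)$ because $J_Y(C_Y(A))\subset\N(A')$. To pass from this to $B\pr y=By$ you would need $A'J_Y$ to be additive along $y=\pr y+(y-\pr y)$. Your suspicion is justified: the identity is false outside Hilbert spaces, so no argument can close this step.

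Take $X=\real$, $Y=\real^2$ with the $\ell^3$-norm (reflexive, strictly convex and smooth), and $Ax=(x,0)$. Then $A$ is injective with closed range, so $\pi_{\N(A)}=0$ is linear. Moreover $\pr(y_1,y_2)=(y_1,0)$, $A'(f_1,f_2)=f_1$, $J_X$ is the identity, and $J_Y(y)=\norm{y}{3}^{-1}(y_1|y_1|,\,y_2|y_2|)$. Hence $By=\norm{y}{3}^{-1}y_1|y_1|$, and for $y=(1,1)$ one gets $By=2^{-1/3}\neq 1=B\pr y$.

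This is not an artefact of the normalization $J_Y=J_2^Y$. Let $\range(A)$ be the span of $(1,2)$ and take $y=(1+\sqrt2,1)$, so that $\pr y=(1,2)$. With the duality map $J_3^Y\colon y\mapsto(y_1|y_1|,y_2|y_2|)$ one finds $A'J_3^Yy=5+2\sqrt2$, whereas $A'J_3^Y\pr y=9$.

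The paper's own proof has the same hole. It establishes $B(y-\pr y)=0$ and treats this as sufficient for item~(2), which tacitly uses additivity of $B$. What is actually true is the weaker statement $By=0$ for $y\in C_Y(A)$. Item~(2) itself does hold when $Y$ is a Hilbert space, since then $A'J_Yy=A'\pr y+A'(y-\pr y)=A'\pr y$; this is your sanity check, and it does not extend.
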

\begin{proof}
  In the proof we use Proposition~\ref{prop:decomposition}.
  To prove item~\eqref{it:Bpi} it is required to show that~$B\lr{y -
    \pi_{\overline{\range(A)}}y} = 0$. Let $y\in C_Y(A)=
  J_{Y}^{-1}\lr{\range(A)^{\perp}}$,  we have to show that~$B y=0$.
 We notice that~$\range(A)^{\perp} = \N(A^{\prime})$, and hence there
  is~$y^{\prime}\in \N(A^{\prime})$ with~$y= J_{Y}^{-1}(y^{\prime})$.
  But
  $$
 B y = J_{X}^{-1} A^{\prime} J_{Y}(y) = J_{X}^{-1}
 A^{\prime} J_{Y}J_{Y}^{-1}(y^{\prime}) = J_{X}^{-1} A^{\prime}y^{\prime}= 0,
 $$
 which proves the first assertion.
 \par
 Similarly we proceed for item~\eqref{it:piB}. Due to the
 decomposition of~$Y$ it requires to show that~$\range(B)\subset
 J_{X}^{-1}\lr{\N(A)^{\perp}} =
 J_{X}^{-1}\lr{\overline{\range(A^{\prime})}}$.
 This is immediate from the definition of~$B$ in~(\ref{eq:9}).
 \par
 It remains to prove the last equivalence.
 Let us first suppose that \newline $\pr y =0$. By using the decomposition of~$Y$ this
 means that~$y \in J_{Y}^{-1}\lr{\overline{\range(A)}^{\perp}}$, which is
 equivalent to~$y\in J_{Y}^{-1}\lr{\N(A^{\prime})}$, and
 henceforth~$J_{Y}y \in \N(A^{\prime})$. It follows
 that~$A^{\prime}J_{Y}y=0$, which yields~$By=0$.
 \par
 To prove the converse, let~$y$ be such that~$ABy=0$. This means
 that~$z:= J_{X}^{-1}A^{\prime}J_{Y}y \in\N(A)$. By definition we have
 that
 $$
\scalar{z }{A^{\prime}J_{Y}y} = \norm{z}{X}^{2}
\norm{A^{\prime}J_{Y}y}{X^{\prime}}^{2} = \scalar{ A z }{J_{Y}y} =0.
$$
Therefore, it must hold true that~$A^{\prime}J_{Y}y=0$, i.e.,\
$J_{Y}y\in \N(A^{\prime}) = \overline{\range(A)}^{\perp}$. We see
that~$y\in J_{Y}^{-1}\lr{\N(A^{\prime})}=J_{Y}^{-1}\lr{
  \overline{\range(A)}^{\perp}} $, and hence in the decomposition
of~$Y$ its projection~$\pr y$ equals zero. The proof is complete.
\end{proof}
\begin{lemma}
  \label{lem:landweber}
      Let~$X_{0}:= \mu B$, and let
      $$
      R_{0}:=
      \pi_{\overline{\range(A)}} - \mu A B\;\; \text{ and
        accordingly}\;\; S_{0} :=
      I_{X} - \pi_{\N(A)} - \mu B A.
      $$
       Suppose that the projection~$\pi_{\N(A)}$ is linear.
      Then~$R_{k+1}:= R_{0}R_{k}$, and correspondingly~$S_{k+1}= S_{0}S_{k}$.
\end{lemma}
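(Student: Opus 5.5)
We argue by direct computation from the Landweber update \eqref{eq:LandweberB}, exactly in parallel with the proof of item~\eqref{it:order2} of Lemma~\ref{lem:Vk2}. The one difference is that the residual is now fed back through the fixed map $X_0=\mu B$ rather than through $X_k$.

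The first step is the residual recursion. By definition $R_{k+1}y = \pr y - AX_{k+1}y$. Inserting $X_{k+1}y = X_ky + \mu B R_k y$ gives
\begin{align*}
R_{k+1}y &= \pr y - AX_ky - \mu AB R_ky = R_ky - \mu AB R_k y .
\end{align*}
By item~\eqref{it:Rkrange} of Lemma~\ref{lem:SkRk} we have $R_ky\in\overline{\range(A)}$. Since a metric projection fixes the elements of the subspace it projects onto, $R_ky = \pr R_ky$. Therefore
\[
R_{k+1}y = \lr{\pr - \mu AB}R_ky = R_0R_ky .
\]
Here $R_0=\pr-\mu AB$ is to be read as an operator applied to the element $R_ky$. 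This identification is consistent with the definition $R_0 = \pr - AX_0$ used earlier. Note that this part does not use linearity of $\pi_{\N(A)}$.

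The second step is the analogous recursion for $S_k$. We have
\begin{align*}
S_{k+1}x = x-\pi_{\N(A)}x - X_{k+1}Ax = S_kx - \mu BR_kAx = S_kx - \mu BAS_kx,
\end{align*}
where the last equality uses item~\eqref{it:ASSR} of Lemma~\ref{lem:SkRk}, namely $R_kA = AS_k$. It remains to show $S_kx = S_kx-\pi_{\N(A)}S_kx$, i.e.\ $\pi_{\N(A)}S_kx=0$. Once this holds, $S_{k+1}x = \lr{I_X-\pi_{\N(A)}-\mu BA}S_kx = S_0S_kx$.

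The vanishing of $\pi_{\N(A)}S_kx$ is the main point, and it is where linearity of $\pi_{\N(A)}$ enters. We show by induction that $\pi_{\N(A)}X_ky=0$ for all $k$. For $k=0$ this is item~\eqref{it:Bpi} of Lemma~\ref{lem:B}. For the inductive step, linearity gives $\pi_{\N(A)}X_{k+1}y = \pi_{\N(A)}X_ky + \mu\pi_{\N(A)}BR_ky = 0$, again by Lemma~\ref{lem:B}. Then, using linearity once more together with $\pi_{\N(A)}^2=\pi_{\N(A)}$,
\[
\pi_{\N(A)}S_kx = \pi_{\N(A)}x - \pi_{\N(A)}x - \pi_{\N(A)}X_kAx = 0 .
\]
I expect this bookkeeping to be the only delicate part. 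Without linearity, $\pi_{\N(A)}$ of a sum cannot be split, which is exactly the cone issue discussed before Assumption~\ref{ass:smallness}. Everything else is routine algebra.

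Finally, an induction on $k$ gives the closed forms $R_k=R_0^{k+1}$ and $S_k=S_0^{k+1}$ if desired.
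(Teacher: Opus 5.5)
Your proof is correct. The residual recursion is exactly the paper's argument: apply $A$ to the update and use $\pr R_k = R_k$.

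For $S_k$ you take a different and more direct route. You substitute $R_kA = AS_k$ inside the update, which gives $S_{k+1}x = S_kx - \mu BAS_kx$. After that you only need $\pi_{\N(A)}S_kx=0$, and you establish this by an explicit induction on $\pi_{\N(A)}X_k=0$, using linearity of $\pi_{\N(A)}$ and $\pi_{\N(A)}B=0$.

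The paper works on the image side instead. It computes $AS_{k+1}$ through $R_{k+1}A$ and then cancels $A$ using injectivity of $A$ on the conal part $C_X(A)$. It asserts, without the induction you supply, that the relevant maps take values in $C_X(A)$.

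Note also that the chain printed in the paper, $AS_{k+1}x = R_{k+1}Ax = R_k^2Ax = \dots = AS_k^2x$, uses the Schulz--Newton relation $R_{k+1}=R_k^2$. It therefore concludes $S_{k+1}=S_k^2$, which is not the Landweber relation stated in the lemma. The intended chain would be $AS_{k+1}x = R_0R_kAx = R_0AS_kx = AS_0S_kx$, followed by injectivity on $C_X(A)$. That step additionally requires checking that $S_0S_kx \in C_X(A)$.

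What each approach buys: your computation avoids the cancellation step altogether and makes explicit exactly where linearity of $\pi_{\N(A)}$ enters. The paper's route has the merit of structurally mirroring its Schulz--Newton proof.
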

\begin{proof}
Applying the operator~$A$ to both sides in~(\ref{eq:LandweberB}) yields the identity
\begin{equation}
  \label{eq:11}
  A X_{k+1}y = A X_{k}y + \mu A B R_{k}y,
\end{equation}
which implies that
\begin{equation}
  \label{eq:12}
  R_{k+1} = \lr{\pi_{\overline{\range(A)}}- \mu A B} R_{k} = R_{0}R_{k},
\end{equation}
since~$\pi_{\overline{\range(A)}} R_{k}= R_{k}$.
A similar reasoning applies when
using~$y:= A x$ in~(\ref{eq:LandweberB}). By doing so, and using the
identity~$R_{k}A = A S_{k}$ we see that
$$
A S_{k+1} x= R_{k+1} Ax = R_k^2 Ax = R_k A S_k x= A S_k^2x. 
$$
This yields~$A S_{k+1} = A S_k^2$. Since under the made assumptions both~$S_k$ and~$S_{k+1}$ map into the conal part~$J_X(\N(A)^\perp)$, on which the mapping~$A$ is injective, we deduce that~$S_{k+1} = S_k^2$.
This proves both  assertions, and this completes
the proof.

\end{proof}
\begin{remark}
 By comparing the alterations of the $R_{k+1}$ with respect
  to $R_{k}$, we have a constant factor, here, whereas the Schultz-Newton 
  scheme from Subsection~\ref{sec:second-order-scheme}
  exhibits a power type
  behavior. Thus, under Assumption~\ref{ass:smallness}, the latter shows
  higher order decay.
\end{remark}
 We turn to the main result in this section.
 \begin{theorem}\label{prop:AAM}
  Suppose that the projection~$\pi_{\N(A)}$ is linear.
   Under Assumption~\ref{ass:smallness} for the initial approximations~$X_0$, the sequences~$X_{k},\
   k=0,1,2,\dots$ from~(\ref{eq:schulz}) 
   and~(\ref{eq:LandweberB})  above constitute regularizations in the sense of Definition~\ref{def:seq-reg}.
 \end{theorem}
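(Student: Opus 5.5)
The plan is to verify the two requirements \eqref{it:VkA} and \eqref{it:AVk} of Definition~\ref{def:seq-reg} through the misfit maps $R_k$ and $S_k$. Everything rests on the recursions already established. For the Schulz-Newton scheme, Lemma~\ref{lem:Vk2} gives $R_k=R_0^{2^k}$ and $S_k=S_0^{2^k}$. For the Landweber scheme, Lemma~\ref{lem:landweber} gives $R_k=R_0^{k+1}$ and $S_k=S_0^{k+1}$.

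The first step is to turn Assumption~\ref{ass:smallness}\eqref{it:small} into contraction estimates. I will read it as $\norm{R_0y}{Y}\le q\norm{y}{Y}$ and $\norm{S_0x}{X}\le q\norm{x}{X}$ for all $y,x$; this uses homogeneity of $R_0,S_0$ and the sign convention $S_0=I_X-\pi_{\N(A)}-X_0A$. Iterating gives $\norm{R_ky}{Y}\le q^{2^k}\norm{y}{Y}$, or $q^{k+1}\norm{y}{Y}$ in the Landweber case, and the analogous bounds for $S_k$. In particular $R_ky\to0$ for every $y\in Y$, and this is exactly \eqref{it:AVk}, since $R_ky=\pr y-AX_ky$. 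Homogeneity of $X_k$ follows by induction from the recursions, because $\pr$ and $B$ are homogeneous.

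The second step is \eqref{it:VkA}. Take $y\in\domain(\Ad)$ and put $\xast=\Ad y$. By \eqref{it:AAM} we have $A\xast=\pr y$. Lemma~\ref{lem:Vk2}\eqref{it:Vpr}, or Lemma~\ref{lem:B}\eqref{it:piB} with induction for Landweber, gives $X_ky=X_k\pr y=X_kA\xast$. Hence
\[
X_ky-\xast = X_kA\xast-(I_X-\pi_{\N(A)})\xast=-S_k\xast,
\]
where I use $\pi_{\N(A)}\xast=0$ because $\xast\in C_X(A)$, so that $(I_X-\pi_{\N(A)})\xast=\xast$. Therefore $\norm{X_ky-\xast}{X}=\norm{S_k\xast}{X}\to0$ by the first step.

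The main obstacle is justifying the invariance properties and the linear-algebra identities for Landweber in the Banach setting, where $R_0$, $S_0$ and $B$ are nonlinear. Concretely, I must check that $\pi_{\N(A)}X_k=0$ and $X_k\pr=X_k$ hold for all $k$. I would do this by induction from Lemma~\ref{lem:B}, using linearity of $\pi_{\N(A)}$ to pass $\pi_{\N(A)}$ through the sums $X_ky+\mu BR_ky$. I must also check that the compositions $R_0R_k$ really iterate the contraction. This only needs $R_k$ to map into $\overline{\range(A)}$, given by Lemma~\ref{lem:SkRk}\eqref{it:Rkrange}, together with the pointwise bound for $R_0$, so linearity is not needed there.

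If the smallness condition is intended only on unit balls, I would first extend it by homogeneity. If the stated sign "$+X_0A$" in \eqref{eq:6} is literal, I would interpret it as the misfit $S_0$ to match Lemma~\ref{lem:landweber}.
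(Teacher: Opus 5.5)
Your proposal is correct and follows the paper's proof. You get contraction of $S_k$ and $R_k$ from the recursions in Lemmas~\ref{lem:Vk2} and~\ref{lem:landweber}, then use $X_ky=X_k\pr y$ to reduce \eqref{it:VkA} to $S_k\xast\to0$, and read off \eqref{it:AVk} from $R_ky\to0$; evaluating $S_k$ directly at $\xast=\Ad y$ only shortcuts the paper's detour through $y\in\range(A)$.

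One caution for Landweber: take the base case $X_0\pr=X_0$ from Assumption~\ref{ass:smallness}, not from Lemma~\ref{lem:B}\eqref{it:piB}. Since $B=J_X^{-1}A^{\prime}J_Y$ is nonlinear, $B(y-\pr y)=0$ does not give $B\pr y=By$, and this equality indeed fails already for $Y=\ell^p$ with $p\neq 2$. The induction step needs only $R_k\pr=R_k$, which follows from the hypothesis $X_k\pr=X_k$.
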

\begin{proof}
   We first observe that under Assumption~\ref{ass:smallness} both
  sequences~$S_{k}y$ and~$R_{k}y$ for~$k=0,1,2,\dots$ tend to zero.
  Indeed, we have that \linebreak $\norm{S_{0}}{X \to
   X}\leq q < 1$. By iterating along the lines of item~\eqref{it:order2} in Lemma~\ref{lem:Vk2}, we find for $x$ with $\norm{x}{X}\leq 1$ that we have convergence
 $$
\norm{S_{k}x}{X \to X} = \norm{S_{0}^{2^{k}}x}{X \to X}\leq q^{2
  ^{k}}\to 0.
$$
For Landweber iteration the contraction is linear, i.e.,\
$\norm{S_{k}x}{X \to X} \leq q^{k}$.
The convergence proof for the sequences~$R_{k}y,\ k=0,1,2,\dots$ is similar, and hence omitted.

\par

  This means that we have for $y\in\range(A)$, precisely with~$y=Ax$, that
  $$
  X_{k}y = X_{k}A x \longrightarrow \lr{I_{X} - \pi_{\N(A)}}x = \Ad Ax
  = \Ad y = \xast(y).
  $$
  To continue we highlight that the schemes under consideration
  obey $X_{k}y = X_{k}\pr y,\ y\in Y$.
  Then, for~$y\in\domain(\Ad)$ and hence with~$\pr y \in\range(A)$, the
  above convergence yields, due to item~\eqref{it:xast3} of
  Proposition~\ref{prop:xast} as
  $$
 X_{k}y = X_{k}\pr y \longrightarrow \xast(\pr y) = \xast(y),
 $$
 which completes the proof of~(\ref{it:VkA}). The proof
 of~(\ref{it:AVk}) is immediate from the convergence of the
 sequences~$R_{k}y,\ k=0,1,2,\dots$ to zero.
\end{proof}
We mention that other schemes are known to exhibit the regularizing
properties. A general guideline for the construction of such schemes
is given in~\cite[Thm.~3.13]{MR5022356}.

\section{Parametric regularization: The Tikhonov scheme}
\label{sec:tikhonov-scheme-as}
In contrast to approximations of the metric generalized inverse $A^\dagger: \domain(A) \subseteq Y \to X$ by means of sequences of homogeneous linear operators~$X_k: Y \to X$ presented before,
we consider in this section Tikhonov regularization in Banach spaces as an example of a parametric regularization scheme.
To this end we adopt the notion of regularization from
Definition~\ref{def:seq-reg}, accordingly.
\begin{definition}
  [parametric regularization]\label{def:parreg}
  Given a bounded linear operator~$A\colon X \to Y$, we call a
  parametric family~$x_{\alpha}\colon Y \to X,\ \alpha>0$, regularization, if for $\alpha \to 0$
  \begin{align}
   x_{\alpha}(y) &\longrightarrow \Ad y,\quad y\in \domain(\Ad)\label{it:xaA},\\
    \intertext{and}  A x_{\alpha}(y) &\longrightarrow \pi_{\overline{\range(A)}}
               y,\quad y\in Y\label{it:Axa}.
  \end{align}
\end{definition}

\begin{theorem} \label{thm:Parathm}
    Suppose that~$A \colon X \to Y$ acts between Banach spaces that obey Assumption~\ref{ass:geometry}. Let~$x_\alpha(y)$ be a parametric regularization.\\
  Either~$y\in \domain(\Ad)$,  and there is a unique minimum norm solution $x_*(y)=A^\dagger y$, then we have that 
  \begin{equation} \label{eq:normcon-dichotomy}
   x_\alpha(y) \longrightarrow x_*(y) \quad \mbox{as} \quad \alpha \to 0,
  \end{equation}  
  or~$y\not\in \domain(\Ad)$ then 
  \begin{equation}\label{eq:normdiv}
      \norm{x_\alpha(y)}{X} \to \infty\quad \text{as}\quad \alpha\to 0.
  \end{equation}
\end{theorem}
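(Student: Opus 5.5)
The plan follows the proof of Theorem~\ref{thm:dichotomy}, replacing the sequence index $k\to\infty$ by the parameter $\alpha\to 0$. Some care is needed because a limit as $\alpha\to 0$ is not sequential in itself.

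\emph{First alternative.} If $y\in\domain(\Ad)$, then $\xast(y)=\Ad y$, and \eqref{eq:normcon-dichotomy} is exactly the requirement~\eqref{it:xaA} of Definition~\ref{def:parreg}. Nothing more needs to be shown.

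\emph{Second alternative.} Let $y\notin\domain(\Ad)$. To prove~\eqref{eq:normdiv} we must show that for every $M>0$ there is $\bar\alpha>0$ with $\norm{x_\alpha(y)}{X}>M$ for all $0<\alpha<\bar\alpha$. Suppose this fails. Then there are $M>0$ and a null sequence $\alpha_n\to 0$ with $\norm{x_{\alpha_n}(y)}{X}\le M$ for all $n$. The reduction to a sequence is the only point that differs from the sequential case.

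\begin{enumerate}
\item Since $X$ is reflexive by Assumption~\ref{ass:geometry}, the Eberlein--Shmulyan theorem yields a subsequence, again denoted $\alpha_n$, with $x_{\alpha_n}(y)\rightharpoonup z$ for some $z\in X$.
\item The bounded linear operator $A$ is weak-to-weak continuous, so $Ax_{\alpha_n}(y)\rightharpoonup Az$ in $Y$.
\item By~\eqref{it:Axa}, $Ax_{\alpha_n}(y)\to\pr y$ in norm, hence also weakly.
\item Weak limits are unique, so $\pr y=Az\in\range(A)$.
\end{enumerate}

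It remains to turn this into a contradiction. By the decomposition~\eqref{eq:Ydeco} we may write $y=\pr y+(y-\pr y)$ with $y-\pr y\in C_Y(A)$. Then $y\in\range(A)+C_Y(A)=\domain(\Ad)$, which contradicts the assumption $y\notin\domain(\Ad)$. Equivalently, one can use the characterization that $y\in\domain(\Ad)$ if and only if $\pr y\in\range(A)$, which follows from Proposition~\ref{prop:xast}/\ref{prop:Ad} in the Appendix.

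The main obstacle I expect is precisely this identification $\domain(\Ad)=\{y:\pr y\in\range(A)\}$. It relies on the fact that $y-\pr y$ always lies in $C_Y(A)$, which is a property of the metric projection (Remark~\ref{rem:CYA}, Proposition~\ref{prop:decomposition}). Once that is granted, the rest of the argument is routine.
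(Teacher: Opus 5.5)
Your proof is correct and follows the paper's intended argument: the paper omits this proof as analogous to that of Theorem~\ref{thm:dichotomy}. There the first alternative is exactly the defining property \eqref{it:xaA}, and the second comes by contradiction from a weakly convergent subsequence, weak-to-weak continuity of $A$, uniqueness of weak limits, and the characterization $y\in\domain(\Ad)\iff\pr y\in\range(A)$ from Proposition~\ref{prop:Ad}. Your explicit reduction to a bounded null sequence $\alpha_n$ is slightly more careful than the paper's proof of Theorem~\ref{thm:dichotomy}, which assumes the whole sequence is bounded instead of properly negating norm divergence.
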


\medskip

Theorem~\ref{thm:Parathm} represents the counterpart to Theorem~\ref{thm:dichotomy}, where again the dichotomy phenomenon occurs. The proof of Theorem~\ref{thm:Parathm} is analog to the proof of Theorem~\ref{thm:dichotomy} and was therefore omitted.

\medskip

In the following we will establish that the Tikhonov scheme, with details given below,  constitutes a parametric regularization.
There is vast literature on variational regularization for operators
acting between Banach spaces, and we mention the basic study \cite{HKPS07} and the monographs~\cite{MR2963507} as well as \cite{Scherzerbuch09}. However, the focus of these works is on the stable approximate solution of equation~\eqref{eq:3} (and extensions to nonlinear forward operators) with \emph{attainable data}, where either the exact right hand side $y \in \range(A)$ to Equation~\eqref{eq:3} or noisy data $y^\delta$ of $y \in \range(A)$ with $\|y^\delta-y\|_Y \le \delta$ are given.  Our goal, however, is to recover
the metric generalized inverse $A^\dagger$ in the sense that elements $x_*(y)=A^\dagger y$ are to be approximated by convergent sequences of Tikhonov-regularized solutions for all elements $y \in \domain(A^\dagger)=\range(A)+C_Y(A)$.
This includes the case of \emph{non-attainable} data when $y \in \domain(A^\dagger)$, but $y \notin \range (A)$.
In this case, $y$ does not play the role of a right-hand side to Equation \eqref{eq:3}, because there is a distance
$${\rm dist}(y,\overline{\range(A)})=\|y-Ax_*(y)\|_Y=\|y-\pi_{\overline{\range(A)}} y\|_Y>0,$$
in contrast to $y =\pi_{\overline{\range(A)}} y\in \range(A)$, where ${\rm dist}(y,\overline{\range(A)})=0$.

\par

The present scheme will be based on the specific Tikhonov functional
\begin{equation} \label{eq:Tikfct}
T_\alpha(x,y):=\|Ax-y\|_Y^p+\alpha \|x\|_X^q, \quad \alpha>0,
\end{equation}
which is similarly discussed in \cite[Chapter~5]{MR2963507}. Here, we consider exponents $p,q \ge 1$.
Moreover, we denote by $x_\alpha(y) \in X$ the minimizers of the functional \eqref{eq:Tikfct} over all $x \in X$,
for fixed \emph{regularization parameter} $\alpha>0$.
Under
Assumption~\ref{ass:geometry}, this functional is strictly convex for
all $\alpha > 0$, because the misfit term $\|Ax-y\|_Y^p$ is convex due to the linearity of $A$ and the part $\alpha\,\|x\|_X^q$ is strictly convex for all $\alpha>0$ due to the strict convexity of the Banach space $X$.
\par

For obtaining assertions on norm-convergence for minimizers $x_\alpha(y)$ of the Tikhonov functional \eqref{eq:Tikfct},
which we call \emph{regularized solutions}, we will need in addition to reflexivity and strict convexity of the Banach space $X$ that it is a \emph{Radon-Riesz space}, or in other words, $X$ possesses the \emph{property~H} (see also item (iii) of Proposition~\ref{prop:Ad} in Appendix~\ref{sec:fundamentals}). This means that
weak convergence~$x_n \rightharpoonup x$ in $X$  and convergence of the norms~$\norm{x_n}{X} \to \norm{x}{X}$ yield convergence in norm $\|x_n-x\|_X \to 0$ as $n \to \infty$. 

\begin{remark}\label{rem-RR}
We note that \emph{uniform convexity} as
imposed on the Banach space $X$ implies reflexivity and strict convexity as well as the property~H for that space. Reflexivity occurs as a result of the Milman-Pettis Theorem, see~\cite[Chapt.~5.2]{YosidaFA}. The fact that every uniformly convex spaces has the Radon-Riesz property is due to M. I. Kadets. 
Thus, the results in this section hold true for uniformly convex spaces.
\end{remark}

\begin{proposition}\label{pro:Tikexist}
Under Assumption~\ref{ass:geometry} there is a uniquely determined minimizer $x_\alpha(y) \in X$ of the Tikhonov functional \eqref{eq:Tikfct} for all $y \in Y$ and $\alpha>0$. If moreover~$X$ is assumed to be a Radon-Riesz space, then
the minimizer $x_\alpha(y)$ (regularized solution) is stable with respect to perturbations in $y \in Y$. Precisely, for any
sequence $(y_n)_{n=1}^\infty \subset Y$, the condition
$\lim_{n \to \infty}\|y_n-y\|_Y$ implies that $\lim_{n \to \infty}\|x_\alpha(y_n)-x_\alpha(y)\|_X=0$.
\end{proposition}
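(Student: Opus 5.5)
Existence and uniqueness will come from the direct method of the calculus of variations, and stability from a weak-compactness argument upgraded to norm convergence by the Radon-Riesz property.

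\emph{Existence.} Fix $y\in Y$ and $\alpha>0$. The functional $T_\alpha(\cdot,y)$ from \eqref{eq:Tikfct} is nonnegative and coercive, since $T_\alpha(x,y)\ge \alpha\norm{x}{X}^q$. A minimizing sequence is therefore bounded, and by reflexivity (Eberlein-Shmulyan) it has a weakly convergent subsequence $x_n\rightharpoonup \bar x$. Since $A$ is bounded, $Ax_n - y\rightharpoonup A\bar x - y$. Norms are weakly lower semicontinuous, and $t\mapsto t^p, t^q$ are increasing and continuous, so $T_\alpha(\bar x,y)\le\liminf T_\alpha(x_n,y)$ and $\bar x$ is a minimizer.

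\emph{Uniqueness.} The misfit term is convex. The penalty $x\mapsto \norm{x}{X}^q$ is strictly convex for $q>1$ in a strictly convex space: if $\norm{x}{X}=\norm{z}{X}$ and $x\neq z$, strict convexity of the norm gives a strict inequality, and otherwise strict convexity of $t^q$ does. Hence there is at most one minimizer. The case $q=1$ is delicate, since the norm is not strictly convex along rays; there one would also need $p>1$ together with strict convexity of $Y$. I would either restrict to $q>1$ or handle ray-collinear minimizers separately, relying on the paper's claim of strict convexity.

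\emph{Stability.} Let $y_n\to y$ and write $x_n:=x_\alpha(y_n)$. Minimality against $x=0$ gives $\alpha\norm{x_n}{X}^q\le \norm{y_n}{Y}^p$, so $(x_n)$ is bounded. Take an arbitrary subsequence; it has a further subsequence with $x_n\rightharpoonup\bar x$. For any $x\in X$, weak lower semicontinuity together with $y_n\to y$ in norm gives
\[
T_\alpha(\bar x,y)\le\liminf T_\alpha(x_n,y_n)\le\limsup T_\alpha(x_n,y_n)\le\lim T_\alpha(x,y_n)=T_\alpha(x,y).
\]
Hence $\bar x=x_\alpha(y)$ by uniqueness. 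Choosing $x=x_\alpha(y)$ shows $T_\alpha(x_n,y_n)\to T_\alpha(\bar x,y)$.

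\emph{Main obstacle: convergence of the norms.} Both terms are separately weakly lower semicontinuous and their sum converges. So if $\limsup\norm{x_n}{X}^q>\norm{\bar x}{X}^q$, then along a subsequence $\liminf\norm{Ax_n-y_n}{Y}^p$ would be strictly smaller than $\norm{A\bar x-y}{Y}^p$, contradicting weak lower semicontinuity. Therefore $\norm{x_n}{X}\to\norm{\bar x}{X}$. The Radon-Riesz property then yields $x_n\to\bar x=x_\alpha(y)$ in norm. Since every subsequence has a further subsequence with this same limit, the whole sequence converges.
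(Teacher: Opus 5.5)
Your proof is correct and follows the paper's route, only written out in full. The paper gets existence, and weak subsequential stability together with convergence of the penalty values, by citing Propositions~4.1 and~4.2 of~\cite{MR2963507}; the proofs of those propositions are exactly your direct-method and $\liminf$/$\limsup$ arguments. It then takes uniqueness from strict convexity and upgrades to norm convergence via the Radon--Riesz property, since the penalty is a power of the norm.

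Your caveat about $q=1$ is well founded, and the flaw it points to is in the paper's justification, not yours. For $q=1$ the functional is not strictly convex. For $p=q=1$ uniqueness genuinely fails: take $X=Y=\real$, $A=I$, $\alpha=1$, $y=1$, and every $x\in[0,1]$ is a minimizer. For $q=1<p$ you should not lean on the paper's claim but argue directly: two minimizers $x_1\neq x_2$ would make both convex terms affine on the connecting segment. Strict convexity of $t\mapsto t^p$ and of $Y$ then forces $Ax_1=Ax_2$, so the norm is constant along the segment, which contradicts strict convexity of $X$.
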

\begin{proof}
As is well-known, continuous linear operators between Banach spaces
are also weak-to-weak continuous. Moreover, for the reflexive Banach
space $X$, the penalty functional $x \to \|x\|_X^q$ is for $p \ge 1$ a
proper convex, lower semi-continuous and stabilizing functional in the
sense of \cite[Ass.~3.22]{MR2963507}. The stabilizing property of the
penalty means that sub-level sets $\{x \in X: \norm{x}{X} \le c\}$ are weakly compact in $X$ for all $c\ge 0$. Then  Propositions~4.1 and 4.2 from \cite{MR2963507} apply for $F:=A$ and yield the assertions of the proposition when taking into account that the uniqueness of regularized solutions is a consequence of the strict convexity of the Tikhonov functional.
The transfer from weak to norm convergence, when applying~\cite[Prop.~4.2]{MR2963507}, is based on the Radon-Riesz property of the Banach space $X$ by reminding us that the occurring penalty term in the Tikhonov functional~\eqref{eq:Tikfct} is a power of the norm in $X$.
\end{proof}
It was shown for sequential regularization in Section~\ref{sec:regularization} that such schemes suppress the conal part, see Proposition~\ref{kegel-weg}. Similar is valid for parametric regularization. Actually, an even stronger assertion holds true for Tikhonov regularization.
\begin{proposition}\label{prop:kegel-weg-tikhonov}
Given~$y\in Y$ let~$x_\alpha(y)$ be the minimizer of the functional~\eqref{eq:Tikfct}.
    If~$y\in C_Y(A)$ 
    then we have $x_\alpha(y)=0$, regardless of~$\alpha>0$.
\end{proposition}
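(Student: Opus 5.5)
The plan is to show directly that $x=0$ minimizes $T_\alpha(\cdot,y)$ when $y\in C_Y(A)$, and then invoke the uniqueness of the minimizer from Proposition~\ref{pro:Tikexist}. The central fact is the characterization of the conal part: by Proposition~\ref{prop:decomposition} (as used in the proof of Lemma~\ref{lem:B}) we have $C_Y(A)=J_Y^{-1}\lr{\range(A)^\perp}$. Equivalently, $y\in C_Y(A)$ exactly when $\pr y=0$, i.e. when $0$ is the metric projection of $y$ onto $\overline{\range(A)}$.

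The key inequality is
$$\norm{y}{Y}\le \norm{y-z}{Y}\quad\text{for all } z\in\overline{\range(A)}.$$
This follows immediately from $\pr y=0$ and the definition of the metric projection. Alternatively, it can be seen via the duality mapping: $J_Y y\in\range(A)^\perp$, so
$$\norm{y}{Y}^2=\scalar{y}{J_Y y}=\scalar{y-z}{J_Y y}\le \norm{y-z}{Y}\norm{y}{Y}.$$
Applying this with $z=Ax$ gives, for every $x\in X$ and every $\alpha>0$,
$$T_\alpha(x,y)=\norm{Ax-y}{Y}^p+\alpha\norm{x}{X}^q\ \ge\ \norm{y}{Y}^p+0=T_\alpha(0,y),$$
using only that $t\mapsto t^p$ is monotone on $[0,\infty)$ for $p\ge1$ and that the penalty is nonnegative. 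Hence $0$ is a minimizer. Moreover, equality forces $\norm{x}{X}=0$ whenever $x\neq 0$ would otherwise give a strictly larger value, because $\alpha\norm{x}{X}^q>0$ for $x\ne0$. So $0$ is even the strict minimizer, and $x_\alpha(y)=0$ for all $\alpha>0$. This argument does not rely on Proposition~\ref{pro:Tikexist}, although it is consistent with it.

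The only step needing care is justifying $\pr y=0$ for $y\in C_Y(A)$, that is, the identification of the cone in the decomposition \eqref{eq:Ydeco} with the set of elements whose metric projection onto $\overline{\range(A)}$ vanishes. This is exactly the content of Remark~\ref{rem:CYA} and the appendix, so I expect no real obstacle: the remaining steps are elementary inequalities.
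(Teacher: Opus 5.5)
Your proof is correct and follows essentially the paper's argument: both use that $0=\pr y$ is the best approximation of $y$ from $\overline{\range(A)}$, so the fidelity term is minimized at $x=0$, and the penalty vanishes only there. Your version is slightly cleaner. You derive $\norm{y}{Y}\le\norm{y-Ax}{Y}$ directly from $J_Y y\in\range(A)^\perp$ and get strictness from $\alpha\norm{x}{X}^q>0$, whereas the paper compares with $\xast(y)=0$ and then detours through $Ax_\alpha(y)=\pr y$, which uses uniqueness of the best approximation in $Y$.
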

\begin{proof}
   It was mentioned in Remark~\ref{rem:CYA}, for~$y\in C_Y(A)$ with $\pr y=0$, that we have $\xast(y)=0$. 
    Using this information we can bound the functional~\eqref{eq:Tikfct} as
    \begin{align*}
    \norm{A x_\alpha(y) - y}{Y}^p + \alpha\norm{x_\alpha(y)}{X}^q
    &\leq \norm{A \xast(y) - y}{Y}^p + \alpha\norm{\xast(y)}{X}^q \\
    &= \norm{\pr y - y}{Y}^p =\left({\rm dist}(y,\overline{\range(A)})\right)^p.
    \end{align*}
    Hence~$A x_\alpha(y)$ is the unique element in $Y$ that approximates $y\in C_Y(A)$ in the best way over all elements from the range of $A$, which implies the equation $A x_\alpha(y) = \pr y=0$. This yields~$x_\alpha(y) = \xast(y)=0$ for all $\alpha>0$
    and all $y \in C_Y(A)$. Now the proof is complete.
\end{proof}
It will be shown in the due course of Theorem~\ref{thm:dichotomyparam} that this phenomenon, mentioned above in 
Proposition~\ref{prop:kegel-weg-tikhonov} extends 
from $y \in C_Y(A)$ to $y\in \domain(\Ad)$ in an asymptotic manner. Instead of~$x_\alpha(y)= \xast(y)$ for~$y\in C_Y(A)$, we have there that~$x_\alpha(y) \to \xast(y)$ as $\alpha \to 0$ for all $y\in\domain(\Ad)$, not only for $y \in \range(A)$.
This result, as part of Theorem~\ref{thm:dichotomyparam}, is novel within the context of the relevant literature, and it adds a new aspect to the properties of Tikhonov regularization.

The following result, the proof of which is given in Appendix~\ref{sec:proof}, 
constitutes that, under additional geometric assumptions, the Tikhonov scheme represents a parametric regularization in the sense of Definition~\ref{def:parreg}.

\begin{theorem}\label{thm:dichotomyparam}
  For the bounded linear operator $A\colon X\to Y$, let $X$ and $Y$ be
  Banach spaces satisfying Assumption~\ref{ass:geometry}. Moreover, $X$  and $Y$ are assumed to be a Radon-Riesz spaces. Consider the uniquely determined minimizer $x_\alpha(y) \in X$ of the Tikhonov functional~\eqref{eq:Tikfct} for
  fixed $y \in Y$ and regularization parameters $\alpha>0.$ The following holds true. \begin{enumerate}
      \item  For~$y \in \domain(A^\dagger),$ and hence when there is a unique minimum norm solution $x_*(y)=A^\dagger y$, we have norm-convergence in $X$ of the form
  \begin{equation} \label{eq:normcon}
   x_\alpha(y) \longrightarrow x_*(y) \quad \mbox{as} \quad \alpha \to 0.
  \end{equation}  
      \item For all $y \in Y$ we have the convergence
      \begin{equation} \label{eq:Anormcon}
  Ax_\alpha(y) \longrightarrow \pi_{\overline{\range(A)}} y\quad \mbox{as} \quad \alpha \to 0.
  \end{equation}  
  \end{enumerate}
  Thus, the family~$x_\alpha(y),\ \alpha>0$, constitutes a parametric regularization in the sense of Definition~\ref{def:parreg} .
  \end{theorem}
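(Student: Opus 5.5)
The plan is to prove item (2) first, for arbitrary $y\in Y$, and then deduce item (1). Throughout, write $d:=\norm{y-\pr y}{Y}={\rm dist}(y,\overline{\range(A)})$.

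For item (2), fix $\varepsilon>0$ and use density of $\range(A)$ in $\overline{\range(A)}$ to choose $x_\varepsilon\in X$ with $\norm{Ax_\varepsilon-\pr y}{Y}\le\varepsilon$. Comparing $T_\alpha(x_\alpha(y),y)\le T_\alpha(x_\varepsilon,y)$ gives
$$\norm{Ax_\alpha(y)-y}{Y}^p\le (d+\varepsilon)^p+\alpha\norm{x_\varepsilon}{X}^q.$$
Hence $\limsup_{\alpha\to0}\norm{Ax_\alpha(y)-y}{Y}\le d$. Since $Ax_\alpha(y)\in\range(A)$, we also have $\norm{Ax_\alpha(y)-y}{Y}\ge d$, so the norms converge to $d$.

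Now take any sequence $\alpha_n\to0$. The elements $z_n:=Ax_{\alpha_n}(y)-y$ are bounded, so by reflexivity a subsequence converges weakly, $z_n\rightharpoonup z-y$ with $z\in\overline{\range(A)}$; this holds because a closed subspace is weakly closed. Weak lower semicontinuity of the norm gives $\norm{z-y}{Y}\le d$, so $z$ is a best approximation of $y$ in $\overline{\range(A)}$. By the Chebyshev property, $z=\pr y$. Then $\norm{z_n}{Y}\to\norm{\pr y-y}{Y}$, and the Radon--Riesz property of $Y$ upgrades the weak convergence to norm convergence. Since every sequence has a subsequence converging to the same limit, the whole family converges, and $Ax_\alpha(y)\to\pr y$. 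This is where the Radon--Riesz assumption on $Y$ enters.

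For item (1), let $y\in\domain(\Ad)$, so that $\pr y=A\xast$ with $\xast:=\xast(y)=\Ad y$. Comparing with $\xast$ gives
$$\norm{Ax_\alpha-y}{Y}^p+\alpha\norm{x_\alpha}{X}^q\le d^p+\alpha\norm{\xast}{X}^q.$$
Since $\norm{Ax_\alpha-y}{Y}\ge d$, it follows that $\norm{x_\alpha}{X}\le\norm{\xast}{X}$. This is the crucial step: non-attainable data $y\notin\range(A)$ cause no trouble, because the misfit term is bounded below by exactly $d^p$.

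Boundedness and reflexivity give, along any $\alpha_n\to0$, a weakly convergent subsequence $x_{\alpha_n}\rightharpoonup\bar x$. Weak-to-weak continuity of $A$ together with item (2) yields $A\bar x=\pr y$, so $\bar x$ is a least-squares solution. Weak lower semicontinuity gives $\norm{\bar x}{X}\le\norm{\xast}{X}$. The minimum norm least-squares solution is unique: the set $\{x:Ax=\pr y\}$ is a translate of $\N(A)$, and strict convexity of $X$ makes its minimum-norm element unique; by Proposition~\ref{prop:xast} that element is $\Ad y$. Hence $\bar x=\xast$, and the norms converge because $\norm{\xast}{X}\le\liminf\norm{x_{\alpha_n}}{X}\le\limsup\norm{x_{\alpha_n}}{X}\le\norm{\xast}{X}$. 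The Radon--Riesz property of $X$ then gives norm convergence, and the subsequence argument again yields $x_\alpha(y)\to\xast(y)$.

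The main obstacle I expect is the identification $\bar x=\Ad y$, i.e.\ correctly matching the minimum norm element of $\{x: Ax=\pr y\}$ with $(I-\pi_{\N(A)})$ of any preimage, via the appendix characterization. The two steps written out above are routine: combining item (2) with the closedness of $\overline{\range(A)}$, and the norm bound $\norm{x_\alpha}{X}\le\norm{\xast}{X}$.
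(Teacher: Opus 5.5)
Your proposal is correct and follows the paper's proof essentially step by step. You first show $\lim_{\alpha\to0}\norm{Ax_\alpha(y)-y}{Y}={\rm dist}(y,\overline{\range(A)})$, identify weak cluster points of $Ax_\alpha(y)$ as $\pi_{\overline{\range(A)}}y$, and invoke the Radon--Riesz property of $Y$; then the bound $\norm{x_\alpha(y)}{X}\le\norm{\xast(y)}{X}$, identification of weak cluster points as the unique minimum norm solution, and the Radon--Riesz property of $X$. The differences are cosmetic: you use an explicit $\varepsilon$-approximant instead of the infimum over $x$, you obtain $A\bar x=\pi_{\overline{\range(A)}}y$ from item (2) rather than from weak lower semicontinuity of $\norm{Az-y}{Y}$, and you spell out the subsequence-of-subsequence argument that the paper leaves implicit.
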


\begin{remark} \label{rem:Tiknoise}
For attainable data~$y$, with ${\rm dist}(y,\overline{\range(A)})=0$, an extension of the result \eqref{eq:normcon} to the case of noisy data $y^\delta=Ax_*(y)+\xi \in Y$ with $\|\xi\|_Y \le \delta$
and noise level $\delta>0$ can be obtained for the related Tikhonov functional
\begin{equation} \label{eq:Tikfctnoise}
T^\delta_\alpha(x,y^\delta):=\|Ax-y^\delta\|_Y^p+\alpha \|x\|_X^q, \quad \alpha>0,
\end{equation}
with associated minimizers $x_\alpha^\delta(y):= x_\alpha(y^\delta)$, under Assumption~\ref{ass:geometry} and for Radon-Riesz spaces $X$.
For the a priori choice of the regularization parameter~$\alpha=\alpha(\delta)$, with
$$ \alpha(\delta) \to 0 \quad \mbox{and} \quad \frac{\delta^p}{\alpha(\delta)}\to 0, \;\; \mbox{as} \quad \delta \to 0,
$$
it is shown that 
$$
x^\delta_{\alpha(\delta)}(y) \to x_*(y).
$$
This can even be extended to appropriate nonlinear forward operators $F:\domain(F) \subset X \to Y$. We refer in this context for details to~\cite[Theorem~3.5]{HKPS07}, 
\cite[Theorem~3.26]{Scherzerbuch09} and \cite[Corollary~4.6]{MR2963507}.

\par
For non-attainable data $y=y_1+y_2 \in Y$ with $y_1 \in \range(A)$, $y_2 \in C_Y(A)$ and ${\rm dist}(y,\overline{\range(A)})>0$, convergence results under noise are only known for Hilbert spaces $X$ and $Y$. The following is shown in~\cite[Theorem~3.5]{Binder94} (again also with possible extensions to appropriate nonlinear forward operators $F$).
For $p=q=2$ in \eqref{eq:Tikfctnoise}, we have
\begin{equation} \label{eq:convnoiseHilbert} x^\delta_{\alpha(\delta)}(y) \to x_*(y)  \quad \mbox{for}\quad \alpha(\delta) \to 0 \;\; \mbox{and} \;\; \frac{\delta}{\alpha(\delta)}\to 0 \;\; \mbox{as} \;\; \delta \to 0 .
\end{equation} 
\end{remark}

\section{The case of Hilbert spaces} \label{sec:discussion}
It is worthwhile to highlight the approximation of the metric generalized inverse in case that the operator~$A\colon X \to Y$  acts between Hilbert spaces.
This
includes an inspection of the specific behaviour of the Tikhonov regularization, of Landweber iteration and of the Schulz-Newton method in Hilbert spaces. 

 In this case we know that the duality mappings~$J_X$ and~$J_Y$, see Appendix~\ref{sec:fundamentals} for the definition,  are the identities in~$X$ and~$Y$, respectively, by identifying the dual space~$X^\prime$ with~$X$, and similarly~$Y^\prime$ with~$Y$. 
By notifying that we have that~$\N(A)^\perp = \overline{\range(A^\ast)}$, and similarly~$\overline{\range(A)}^\perp = \N(A^\ast)$, for the adjoint operator~$A^\ast\colon Y \to X$ to~$A$.
The decompositions (direct sums) are thus
\begin{align*}    
 X &= \N(A) + \overline{\range(A^\ast)}
 \intertext{and}
 Y &= \overline{\range(A)} + \N(A^\ast)
\end{align*}
These are even orthogonal decompositions into two closed linear subspaces. Also, the projections~$\pi_{\N(A)}$ as well as~$\pr$ are orthogonal projections. 

The restricted injective inverse mapping $A^{-1}:\range(A) \to \overline{\range(A^\ast)}$ is also linear. Consequently, the Moore-Penrose generalized inverse $A^\dagger: \domain(A^\dagger)=\range(A)+\N(A^*) \to X$ is a linear operator with range $\range(A^\dagger)=\overline{\range(A^\ast)} \perp \N(A).$
In this case, the metric generalized inverse coincides with the Moore-Penrose inverse~$\Ad\colon Y \to X$ with~$\domain(A^\dagger) = \range(A) + \N(A^\ast)$. For $y \in\domain(A^\dagger)$ there is a uniquely determined minimum norm solution $x_*(y)\perp \N(A)$. The important fact is that for all~$y \in \domain(\Ad)$, with~$y = y_1 + y_2$, and~$y_1 \in \range(A)$ and~$y_2\in \N(A^\ast)$, we have that~$A^*y = A^\ast y_1$. Thus, regularization schemes, which are based on using~$A^\ast y$, automatically suppress the part in~$\N(A^\ast)$.

Let us specify this for Landweber iteration as studied 
in Subsection~\ref{sec:relat-spectr-regul}. Notice that here the governing map~$B\colon Y \to X$ equals~$A^\ast$, and hence in Hilbert spaces this yields reconstructions, linear in the data~$y$. Using initially~$X_0y:= \mu B y$, we are in the situation from above. Item~\eqref{it:Bpi} in Lemma~\ref{lem:B} highlights that again we see~$By \perp \N(A)$. This led to Theorem~\ref{prop:AAM}, which asserts that Landweber iteration constitutes a sequential regularization in the sense of Definition~\ref{def:seq-reg} when started with the mapping~$B$, provided that the parameter~$\mu>0$ is small enough.

Similar reasoning applies to Tikhonov regularization (see, e.g.,~\cite[Chapt.~3]{Groetsch84}).
Hilbert spaces are of Radon-Riesz type, satisfy Assumption~\ref{ass:geometry}, and the uniquely determined minimizer $x_\alpha(y) \in X$ of the Tikhonov functional
\begin{equation} \label{eq:TikfctHilbert}
T_\alpha(x,y):=\|Ax-y\|_Y^2+\alpha \|x\|_X^2, \quad \alpha>0,
\end{equation}
is linear and it is explicitly given as
\begin{equation} \label{eq:RegsolHilbert}
x_\alpha(y)=(A^*A+\alpha I)^{-1}A^*y, \quad y \in Y.
\end{equation}

Taking into account that $A^*y=0$ for $y \in \N(A^*)$,
one easily finds from \eqref{eq:RegsolHilbert} the limit statements
$$ \lim_{\alpha \to 0} x_\alpha(y) = x_*(y)=A^\dagger y, \quad y \in \domain(A^\dagger)$$
and
$$ \lim_{\alpha \to 0} A x_\alpha(y) = \pi_{\overline{\range(A)}}y, \quad y \in Y, $$
which coincide with the conditions~\eqref{it:xaA} and \eqref{it:Axa} in Definition~\ref{def:parreg}.

A proof of the convergence result \eqref{eq:convnoiseHilbert} with noisy data in both the attainable data and the non-attainable data case is rather simple for linear operators~$A$ in the Hilbert space setting, because of having the explicit representation~
$$x^\delta_\alpha(y)=(A^*A+\alpha I)^{-1}A^*y^\delta,$$
in combination with the fact that using $A^*y^\delta$ automatically suppresses the part of $y^\delta$ in $\N(A^*)$.
\smallskip

Overall, the above reasoning applies to general \emph{spectral regularization} in Hilbert space. We will not dwell into this. However, the regularizing properties, as given in Theorems~\ref{thm:dichotomy} and Theorem~\ref{thm:dichotomyparam}, are well known there, and we mention the monograph~\cite[Thm.~5.2]{MR859375}.

\smallskip

We conclude this section with a brief discussion, highlighting the fundamental differences between Landweber iteration and the Schulz-Newton method.  Landweber iteration is known to be a gradient descent algorithm, i.e., \ mimimizing the quadratic functional
$$
x \longrightarrow \frac{\norm{A x - y}{Y}^2}{2},\quad x\in X.
$$
In this case the gradient evaluates to~$- A^\ast y$, which in turn yields Landweber iteration in the form of~\eqref{eq:landweber}, after introducing an acceleration factor~$\mu>0$.
The Schulz-Newton iteration is also a gradient descent algorithm, but based on sequentially minimizing the functional
$$
 x \longrightarrow \frac{\norm{ \lr{ I_X- X_k A} x}{X}^2}{2}, \quad x\in X.
$$
Notice that we have to constrain minimization to elements~$x \in \N(A)^\perp$. This corresponds to the conal part in the decomposition of~$X$, which however simplifies in the case that the operator~$A$ is injective.
It is immediately seen that this results in~\eqref{eq:schulz}. The latter scheme is known to be quadratically convergent, provided that the initial guess~$X_0$ fulfills the smallness Assumption~\ref{ass:smallness}.

\appendix

\section{Fundamentals }
\label{sec:fundamentals}
In this appendix we collect several facts which are underlying the
considerations in the main body.
\paragraph{{\bf Duality mappings}}
On the Banach space~$X$ we let~$J_{X}\colon X \to X^{\prime}$ denote the
  map
  $$
J_{X}(x) := \set{x^{\prime}\in X^{\prime},\ \scalar{x^{\prime}}{x} =
  \norm{x}{X}^{2}= \norm{x^{\prime}}{X^{\prime}}^{2}}.
$$
Similar applies to the Banach space~$Y$.
\begin{remark}
  There is parametric family of duality maps, often denoted
  by~$J_{p}^{X}, p\geq 1$. Here we confine to~$p=2$ to ensure the
  \emph{homogeneity} of the duality map, and hence let~$J_{X}:= J_{2}^{X}$.
  For a comprehensive study on duality maps we refer to~\cite{MR1079061}. It is immediate from the definition that the duality map~$J_X$ is the identity when the space~$X$ is a Hilbert space.
\end{remark}
The following is known.
\begin{proposition}\label{prop:duality}
  Suppose that the Banach space~$X$ is strictly convex and reflexive.
  The following holds true.
  \begin{enumerate}
  \item The duality map is
    single-valued, continuous, and homogeneous
    $J_{X}(\lambda x) = \lambda  J_{X}(x),\ x\in X$, and for all $\lambda \in\real$.
  \item We have that~$J_{X^{\prime}}(J_{X}(x)) = x$, and
also~$J_{X}(J_{X^{\prime}}(x^{\prime}))= x^{\prime}$.
\item The inverse~$J_{X}^{-1}(x^{\prime})$ is single-valued and continuous for
  each~$x^{\prime}\in X^{\prime}$.
  \end{enumerate}
\end{proposition}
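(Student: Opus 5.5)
The plan is to verify the three items directly from the definition of $J_X$. The tools are the Hahn--Banach theorem, the identification $X''=X$ given by reflexivity, and the fact that a strictly convex space has no nontrivial segments on its unit sphere.

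\emph{Nonemptiness and homogeneity.} Given $x\neq 0$, the Hahn--Banach theorem gives $f\in X'$ with $\norm{f}{X'}=1$ and $\scalar{f}{x}=\norm{x}{X}$. Then $x':=\norm{x}{X}f$ belongs to $J_X(x)$; for $x=0$ we take $x'=0$. For homogeneity, let $x'\in J_X(x)$ and $\lambda\in\real$. A one-line check gives $\scalar{\lambda x'}{\lambda x}=\lambda^2\norm{x}{X}^2=\norm{\lambda x}{X}^2=\norm{\lambda x'}{X'}^2$, so $\lambda x'\in J_X(\lambda x)$. Once single-valuedness is established, this yields $J_X(\lambda x)=\lambda J_X(x)$.

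\emph{Item (ii) and injectivity.} I would prove item (ii) before single-valuedness, since it only uses the strict convexity of $X$ and reflexivity. Fix $x'\in J_X(x)$ and regard $x\in X=X''$. The defining relation $\scalar{x'}{x}=\norm{x'}{X'}^2=\norm{x}{X}^2$ is symmetric, so $x\in J_{X'}(x')$. Conversely, suppose $\tilde x\in J_{X'}(x')$. After normalisation, both $x/\norm{x}{X}$ and $\tilde x/\norm{\tilde x}{X}$ are points of the unit sphere at which the norm-one functional $x'/\norm{x'}{X'}$ attains the value $1$. The set of such points is convex, so by strict convexity of $X$ it is a singleton. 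Together with the norm equality, this gives $J_{X'}(x')=\{x\}$. Hence $J_X$ is injective, $J_X^{-1}=J_{X'}$ is single-valued, and surjectivity follows from reflexivity: every $x'$ attains its norm by weak compactness of the unit ball (James' theorem in the easy direction). This is already the single-valuedness part of item (iii).

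\emph{Single-valuedness and continuity: the main obstacle.} Single-valuedness of $J_X$ itself is the step I expect to be hardest. By the symmetric argument above, it is equivalent to strict convexity of $X'$, i.e.\ smoothness of $X$, and this is not literally implied by strict convexity of $X$ alone. My plan is to argue it under the standing geometry, reading Assumption~\ref{ass:geometry} as also covering $X'$ (reflexive $X$ with strictly convex dual). If needed, I would invoke the renorming convention of~\cite{MR1079061}, under which both $X$ and $X'$ are strictly convex; this convention is harmless for the paper's use of $J_X$.

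For continuity I would first prove norm-to-weak continuity (demicontinuity). Take $x_n\to x$. Then $\norm{J_X x_n}{X'}=\norm{x_n}{X}$ is bounded, so by reflexivity every subsequence has a weakly convergent sub-subsequence, say with limit $z'$. Passing to the limit in $\scalar{J_Xx_n}{x_n}=\norm{x_n}{X}^2$ and using weak lower semicontinuity of the norm gives $\scalar{z'}{x}=\norm{x}{X}^2$ and $\norm{z'}{X'}\le\norm{x}{X}$, hence $\norm{z'}{X'}=\norm{x}{X}$. So $z'\in J_X(x)$, and uniqueness of $J_X(x)$ forces $z'=J_Xx$ for the whole sequence. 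Moreover, $\norm{J_Xx_n}{X'}=\norm{x_n}{X}\to\norm{x}{X}=\norm{J_Xx}{X'}$. Upgrading to norm convergence therefore only requires the Radon--Riesz property of $X'$, and this is where an additional geometric hypothesis (e.g.\ uniform convexity of $X'$) enters. The same argument with the roles of $X$ and $X'$ exchanged gives continuity of $J_X^{-1}=J_{X'}$.
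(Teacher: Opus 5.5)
The paper gives no proof here: the proposition is introduced with ``The following is known'' and a pointer to the literature on duality maps. The real question is therefore whether the statement follows from its hypotheses. Your diagnosis is correct: it does not.

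A two-dimensional example settles item (i). On $X=\real^2$ take $\norm{x}{X}:=(x_1^2+x_2^2)^{1/2}+|x_1|$. Equality in the triangle inequality forces equality in the Euclidean part, so $X$ is strictly convex, and it is trivially reflexive. Let $x=(0,1)$ and $x'=(t,1)$ with $|t|\le 1$. Then $|tz_1+z_2|\le |z_1|+|z_2|\le\norm{z}{X}$, hence $\norm{x'}{X'}=1$ and $\scalar{x'}{x}=1=\norm{x}{X}^2$. So $J_X(x)$ contains a whole segment.

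This also breaks the second identity in (ii). For these $x'$ one has $J_{X'}(x')=x$, but $J_X(x)\neq\set{x'}$.

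As for continuity, norm-to-norm continuity of a single-valued $J_X$ is equivalent to Fr\'echet differentiability of the norm of $X$. Norm continuity of $J_X^{-1}=J_{X'}$ forces the Radon--Riesz property of $X$. Neither follows from strict convexity and reflexivity, so without extra assumptions your argument rightly stops at norm-to-weak continuity.

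Everything you derive from the stated hypotheses is correct:
\begin{itemize}
\item nonemptiness of $J_X(x)$;
\item $J_X(\lambda x)=\lambda J_X(x)$ as sets;
\item the strict-convexity argument giving $J_{X'}(x')=\set{x}$ for every $x'\in J_X(x)$, which yields the first identity in (ii), injectivity of $J_X$, and single-valuedness of $J_X^{-1}=J_{X'}$;
\item surjectivity via weak compactness of the unit ball;
\item the demicontinuity argument and its Radon--Riesz upgrade.
\end{itemize}

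Two points need correcting.

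First, (ii) does not ``only use'' strict convexity of $X$ and reflexivity. From these hypotheses the second half holds merely as the inclusion $x'\in J_X(J_{X'}(x'))$.

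Second, and more seriously, the renorming fallback is not harmless. Passing to an equivalent norm changes the metric projections $\pi_{\N(A)}$ and $\pr$ and the cones $C_X(A)$, $C_Y(A)$. Hence it changes $\domain(\Ad)$ and $\Ad$ itself, i.e.\ the very object under study. The repair must be made in the hypotheses of the proposition:
\begin{itemize}
\item smoothness of $X$ for (i) and the second half of (ii); for reflexive $X$ this is equivalent to strict convexity of $X'$, which is not contained in Assumption~\ref{ass:geometry};
\item for norm continuity, e.g.\ the Radon--Riesz property of $X'$ (for $J_X$), respectively of $X$ (for $J_X^{-1}$).
\end{itemize}
Your argument proves exactly this corrected version.

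Note that the paper needs at least smoothness of $Y$ later on. The map $B=J_X^{-1}A^{\prime}J_Y$ in \eqref{eq:9} presupposes a single-valued $J_Y$.
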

\paragraph{{\bf Decomposition of the Banach spaces}}
Given a closed subspace~$L\subset X$ with annihilator~$L^{\perp}\subset
X^{\prime}$, we let~$J_{X}^{-1}(L^{\perp}) = \set{x\in X,\ J_{X}(x)\cap
L^{\perp}\not=\emptyset}$. Since the duality mapping is homogeneous,
this set constitutes a cone in~$X$. It is a closed cone,
because~$L^{\perp}$ is closed, and the mapping~$J_{X}^{-1}$ is continuous.
We state the following results, see e.g.~\cite{MR1338551}. 
\begin{proposition}\label{prop:decomposition}
  Under Assumption~\ref{ass:geometry} the following holds true.
  Given the linear operator~$A\colon X \to
  Y$, we have the decompositions
  \begin{align}
    X &= \N(A) + J_{X}^{-1}\lr{\N(A)^{\perp}}\label{it:Xdec}
    \intertext{and}
    Y&= \overline{\range(A)} + J_{Y}^{-1}\lr{\range(A)^{\perp}}.\label{it:Ydec}
  \end{align}
  The mapping~$A\colon  J_{X}^{-1}\lr{\N(A)^{\perp}} \to \range(A)$ is injective.
\end{proposition}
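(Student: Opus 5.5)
The plan is to reduce all three assertions to one characterization of metric projections onto a closed subspace $L$ of a reflexive, strictly convex space. We then apply it with $L=\N(A)\subset X$ and with $L=\overline{\range(A)}\subset Y$. The characterization has two directions:
\begin{enumerate}
\item[(a)] for every $x\in X$ one has $J_X\lr{x-\pi_L x}\in L^\perp$;
\item[(b)] if $J_X(x)\in L^\perp$, then $\pi_L x=0$.
\end{enumerate}
Under Assumption~\ref{ass:geometry}, $L$ is Chebyshev, so $\pi_L$ is well defined and single-valued. By Proposition~\ref{prop:duality}, $J_X$ is single-valued, so $J_X^{-1}(L^\perp)=\set{x\in X:\ J_X(x)\in L^\perp}$.

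For (a), set $u:=x-\pi_Lx$ and $d:=\norm{u}{X}=\mathrm{dist}(x,L)$; we may assume $d>0$. Minimality gives $\norm{u+l}{X}\ge d$ for all $l\in L$, since $\pi_Lx-l\in L$. On the linear span of $L$ and $u$, define $f(l+tu):=t\,d^{2}$. For $t\neq 0$,
$$
\norm{l+tu}{X}=|t|\,\norm{u+l/t}{X}\ge |t|\,d,
$$
so $|f(l+tu)|\le d\,\norm{l+tu}{X}$ and $f$ has norm at most $d$. By Hahn--Banach, $f$ extends to $x'\in X'$ with $\norm{x'}{X'}\le d$. Since $\scalar{x'}{u}=d^2$, we get $\norm{x'}{X'}=d$ and $\scalar{x'}{u}=\norm{u}{X}^2=\norm{x'}{X'}^2$, hence $x'=J_X(u)$. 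Also $x'$ vanishes on $L$, so $J_X(u)\in L^\perp$.

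Writing $x=\pi_Lx+(x-\pi_Lx)$ then gives both decompositions \eqref{it:Xdec} and \eqref{it:Ydec}. For $Y$ we use $L=\overline{\range(A)}$ together with $\overline{\range(A)}^{\perp}=\range(A)^\perp$.

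For (b), let $J_X(x)\in L^\perp$ and $l\in L$. Then
$$
\norm{x}{X}^2=\scalar{J_Xx}{x}=\scalar{J_Xx}{x-l}\le \norm{x}{X}\,\norm{x-l}{X},
$$
so $\norm{x}{X}\le\norm{x-l}{X}$ for all $l\in L$. Thus $0$ is a best approximation of $x$ in $L$, and by uniqueness $\pi_Lx=0$.

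For injectivity, take $x_1,x_2\in J_X^{-1}(\N(A)^\perp)$ with $Ax_1=Ax_2$. The cone is not linear, so instead of subtracting inside it we write $x_1=x_2+n$ with $n:=x_1-x_2\in\N(A)$. Metric projections onto a subspace satisfy $\pi_L(z+n)=\pi_L(z)+n$ for $n\in L$, because translating by $n$ maps $L$ onto itself and preserves distances. Together with (b) this gives
$$
0=\pi_{\N(A)}x_1=\pi_{\N(A)}x_2+n=n,
$$
so $x_1=x_2$, and $A$ is injective on $J_X^{-1}\lr{\N(A)^\perp}$.

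The only step with real content is the Hahn--Banach construction in (a). There the key points are the norm estimate on the span of $L$ and $u$, and identifying the extension with $J_X(u)$, which uses single-valuedness of $J_X$. Everything else is routine bookkeeping.
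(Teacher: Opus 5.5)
Your proof is correct. The paper gives no argument for this proposition; it only refers to the literature. What you have written is the standard proof behind that reference: the Hahn--Banach characterization of the metric projection onto a closed subspace (your (a) and (b)), plus the translation identity $\pi_L(z+n)=\pi_L z+n$ for injectivity. The same two facts also give \emph{uniqueness} of the components in \eqref{it:Xdec} and \eqref{it:Ydec}. The paper uses this uniqueness tacitly later, e.g.\ in the density sketch after Proposition~\ref{prop:Ad}. The argument is one line: if $x=l+c$ with $l\in L$ and $c\in J_X^{-1}(L^\perp)$, then $\pi_L x=\pi_L c+l=l$. Your injectivity argument is exactly this observation applied to $x_1=0+x_1=n+x_2$.

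One thing to fix: do not rely on single-valuedness of $J_X$. Under Assumption~\ref{ass:geometry} alone it can fail. Single-valuedness of $J_X$ requires smoothness of $X$ (equivalently, strict convexity of $X'$); strict convexity of $X$ only makes $J_X^{-1}=J_{X'}$ single-valued. For example, $\real^2$ with the norm $\|\cdot\|_2+\|\cdot\|_\infty$ is strictly convex, but its unit sphere has corners. So item~1 of Proposition~\ref{prop:duality} overstates what holds.

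Your argument does not actually need single-valuedness. The paper defines $J_X^{-1}(L^\perp)$ as the set of $x$ with $J_X(x)\cap L^\perp\neq\emptyset$. Your Hahn--Banach step produces exactly an element $x'\in J_X(u)\cap L^\perp$. Step (b) goes through verbatim for any element of $J_X(x)\cap L^\perp$. So replace ``hence $x'=J_X(u)$'' by ``hence $x'\in J_X(u)$'', and the proof is valid in the full generality of Assumption~\ref{ass:geometry}.
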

In the introductory discussion we have used the abbreviations 
$$
C_X(A):= J_{X}^{-1}\lr{\N(A)^{\perp}},\quad  \text{and}~C_Y(A):= J_{Y}^{-1}\lr{\range(A)^{\perp}}.
$$
\medskip

\paragraph{{\bf Properties of the metric generalized inverse}}

Here we collect results from~\cite[Cor.~3.2 \& Thm.~4.1]{MR2024990}.
\begin{proposition}\label{prop:Ad}
  Under Assumption~\ref{ass:geometry} the following holds true.
  \begin{enumerate}
\item The
  metric generalized inverse~$\Ad$ exists.
\item Its domain~$\domain(\Ad)$ is given as
\begin{equation}
  \label{eq:domainAd}
   \domain(\Ad) = \range(A) \stackrel{\cdot}{+} J_{Y}^{-1}\lr{\range(A)^{\perp}}.
 \end{equation}
  In particular,
  $y\in\domain(\Ad)$ if and only if~$\pi_{\overline{\range(A)}}y  \in\range(A)$.
  \item If the space~$X$ has the property~H (Radon-Riesz property), i.e.,
    ~$x_{n}\rightharpoonup x$ and~$\norm{x_{n}}{X} \to \norm{x}{X}$,
    yields~$x_{n} \to X$ in norm, then the metric generalized inverse
    is continuous if and only if the range~$\range(A)\subset Y$
    is closed. In this case~$\domain(\Ad)=Y$.
    If the range of~$A$ is not closed, then the domain~$\domain(\Ad)$ is dense in~$Y$.
  \end{enumerate}
\end{proposition}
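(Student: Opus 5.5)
The plan is to build $\Ad$ explicitly from the decompositions of Proposition~\ref{prop:decomposition} and to use one basic fact throughout. This is the \emph{characterization of best approximations}: for a closed subspace $L$ of a reflexive strictly convex space and $u\in L$, we have $u=\pi_L(z)$ if and only if $J(z-u)\in L^{\perp}$, i.e.\ $z-u\in J^{-1}(L^{\perp})$. It follows from the Hahn--Banach and subdifferential description of $\min_{u\in L}\norm{z-u}{}$, and we will prove it first.

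\textbf{Items (1) and (2).} Applied to $L=\N(A)$, the fact gives $x-\pi_{\N(A)}x\in C_X(A)$ for every $x\in X$. Applied to $L=\overline{\range(A)}$, it gives $y-\pr y\in C_Y(A)$, and it also shows that the representation $y=r+c$ with $r\in\overline{\range(A)}$, $c\in C_Y(A)$ forces $r=\pr y$. Hence the sum in \eqref{eq:domainAd} is direct, and $y\in \range(A)+C_Y(A)$ holds exactly when $\pr y\in\range(A)$. For such $y$ we pick any $x$ with $Ax=\pr y$ and set $\Ad y:=x-\pi_{\N(A)}x\in C_X(A)$. This is well defined: two choices of $x$ differ by an element of $\N(A)$, and $A$ is injective on $C_X(A)$ by Proposition~\ref{prop:decomposition}. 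Equivalently, $\Ad y$ is the unique minimum-norm element of $A^{-1}(\pr y)$. Homogeneity follows from homogeneity of both metric projections. The four identities of Definition~\ref{def:Ad} are then direct checks, for instance $\Ad Ax=x-\pi_{\N(A)}x$ and $A\Ad y=\pr y$.

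\textbf{Item (3), closed range implies continuity.} If $\range(A)$ is closed, then $\domain(\Ad)=Y$ by item (2). The induced map $\tilde A\colon X/\N(A)\to\range(A)$ is bijective and bounded, so by the open mapping theorem it has a bounded inverse. Since $\Ad y$ has minimal norm in its coset, $\norm{\Ad y}{X}=\norm{\tilde A^{-1}\pr y}{X/\N(A)}$. Now let $y_n\to y$. Because $\pr$ is $1$-Lipschitz up to a factor $2$ in norm, or at least continuous here, we get $\pr y_n\to\pr y$, and hence $\norm{\Ad y_n}{X}\to\norm{\Ad y}{X}$. The sequence $(\Ad y_n)$ is therefore bounded, and by reflexivity every subsequence has a weakly convergent subsequence with some limit $z$. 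Weak continuity of $A$ gives $Az=\pr y$, and weak lower semicontinuity gives $\norm{z}{X}\le\norm{\Ad y}{X}$. By the minimum-norm property and strict convexity, $z=\Ad y$. Property H then upgrades weak convergence plus convergence of norms to norm convergence of the whole sequence.

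\textbf{Item (3), the converse and density.} Suppose $\range(A)$ is not closed. For density, take $y\in Y$ and choose $r_n\in\range(A)$ with $r_n\to\pr y$. Then $r_n+(y-\pr y)\in\domain(\Ad)$ and these elements converge to $y$. For the converse of continuity, assume $\Ad$ is continuous. Continuity at $0$ together with homogeneity gives $\norm{\Ad y}{X}\le C\norm{y}{Y}$ for $y\in\domain(\Ad)$. Taking $y=Ax$ yields $\norm{[x]}{X/\N(A)}\le C\norm{Ax}{Y}$, so $\tilde A$ is bounded below and $\range(A)$ is closed, a contradiction.

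\textbf{Main obstacle.} I expect the hardest step to be the continuity of the metric projection $\pr$ in the closed-range part. The direct approach is to show that for $y_n\to y$ the sequence $\pr y_n$ is bounded, since $\norm{\pr y_n}{Y}\le2\norm{y_n}{Y}$. Any weak cluster point $w$ then satisfies $\norm{y-w}{Y}\le\liminf\norm{y_n-\pr y_n}{Y}=\mathrm{dist}(y,\overline{\range(A)})$, so $w=\pr y$ by uniqueness. Norm convergence then requires property H in $Y$. I would therefore either invoke that hypothesis (as in Theorem~\ref{thm:dichotomyparam}) or refer to \cite{MR2024990} for this point.
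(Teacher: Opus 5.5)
The gap is in the direction ``$\range(A)$ closed $\Rightarrow$ $\Ad$ continuous'', at the step $\pr y_n\to\pr y$. The reason you give there is not valid. The bound $\norm{\pr y}{Y}\le 2\norm{y}{Y}$ only controls the size of a homogeneous but non-additive map, so it gives no control of $\norm{\pr y_n-\pr y}{Y}$. Metric projections are not Lipschitz in general. Under Assumption~\ref{ass:geometry} alone, your cluster-point argument in the last paragraph proves exactly that $\pr$ is norm-to-weak continuous, and one cannot do better. A.~L.~Brown (Michigan Math.\ J.\ 21, 1974) constructed a strictly convex reflexive space containing a closed subspace of codimension two whose metric projection is discontinuous.

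This cannot be repaired, because continuity of $\pr$ is \emph{necessary}. If $\range(A)$ is closed, then $\domain(\Ad)=Y$ and $\pr=A\Ad$ with $A$ bounded. Now take $Y$ to be Brown's space with that subspace $L$. Let $X$ be $L$ with an equivalent locally uniformly rotund norm (Troyanski), so $X$ is reflexive, strictly convex and has property H, and let $A$ be the inclusion. Then $\range(A)=L$ is closed, yet $\Ad=\pr$ (viewed as a map into $X$) is discontinuous. So item (3) cannot be obtained from property H on $X$ alone.

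What your argument does establish is the following. If $X$ has property H and $\range(A)$ is closed, then $\Ad$ is continuous if and only if $\pr$ is. Moreover, $\pr$ is continuous once $Y$ also has property H; this is your fallback, via $y_n-\pr y_n\rightharpoonup y-\pr y$ together with convergence of the distances. You need to add such a hypothesis explicitly rather than write ``or at least continuous here''. The paper itself gives no argument for this part and only cites \cite[Thm.~4.1]{MR2024990}.

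The remaining pieces of your proposal are correct and more self-contained than the paper's appeal to the literature. These are items (1) and (2), the implication ``$\Ad$ continuous $\Rightarrow$ $\range(A)$ closed'' (which needs no property H), and density, which coincides with the paper's sketch.
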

The last assertion could not be found in the literature, and we sketch a proof of~$\overline{\domain(A^\dagger)}=Y$, here:
Consider an arbitrary
element $y=y_1+y_2 \in Y$, for which $y_1 \in \overline{R(A)}$ and $y_2 \in C_Y(A)$ are uniquely determined. Then there is a sequence $z_n \in \range(A)$ converging in norm to $y_1$ as $n \to \infty$. Moreover, the sums~$z_n+y_2$ belong to the domain of $A^\dagger$, because~$\pr (z_n + y_2) = z_n\in\range(A)$. Finally we have $\|z_n+y_2-y\|_Y = \norm{z_n - y_1}{Y} \to 0$ as $n \to \infty$.

\medskip

\paragraph{{\bf Minimum norm solutions, best approximate solutions}}

Let~$y\in Y$ be any element. An element~$x_{0}\in X$ is called \emph{best
  approximate solution} to~$Ax =y$, if
$$
\norm{A x_{0} - y}{Y} = \inf\set{\norm{A x - y}{Y},\ x\in X}.
$$
Best approximate solutions need not exist. If they exist, then such a solution
is called \emph{minimum norm solution} whenever it has mimimal norm among all
best approximate solutions.
The following results can be seen from the proof of~\cite[Thm.~3.2]{MR2024990}, and from the fact
that $\pr y \notin \range(A)$ for all $y \notin \domain(\Ad)$.
\begin{proposition}\label{prop:xast}
  Under Assumption~\ref{ass:geometry} the following is known for the
  elements~$y\in\domain(\Ad)$.
  \begin{enumerate}
\item\label{it:xast} $\xast(y) := \Ad y$ is a best approximate
  solution to the equation $A x = y$.
\item \label{it:xast2} $\xast(y) := \Ad y$ is the uniquely determined minimum norm solution to the equation~$A x =
  \pi_{\overline{\range(A)}}y$.
  \item \label{it:xast3}  $\xast(y) := \Ad y$
  is the only solution to~$A x = \pr y$ with~$x\in
  J_{X}^{-1}\lr{\N(A)^{\perp}}$, and hence it satisfies~$\pi_{\N(A)}\xast(y)=0$.
  In particular we have that~$\xast(y)= \xast(\pr y)$ for~$y\in\domain(\Ad)$, since~$\pr$ is a projection.
 \end{enumerate}
For~$y\notin\domain(\Ad)$, however, best approximate solutions and minimum norm solutions do not exist.
\end{proposition}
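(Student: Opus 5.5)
The plan is to reduce everything to the characterization of the metric projection $\pr$ onto the Chebyshev subspace $\overline{\range(A)}$, using the defining properties of $\Ad$ from Definition~\ref{def:Ad}, Proposition~\ref{prop:decomposition} and item~(ii) of Proposition~\ref{prop:Ad}.

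\emph{Step 1 (best approximate solutions).} Since $\range(A)$ is dense in $\overline{\range(A)}$ and the norm is continuous, for every $y\in Y$
$$
\inf\set{\norm{Ax-y}{Y},\ x\in X} = {\rm dist}\lr{y,\overline{\range(A)}} = \norm{y-\pr y}{Y}.
$$
Hence $x_0$ is a best approximate solution if and only if $Ax_0\in\range(A)\subset\overline{\range(A)}$ attains the distance from $y$ to $\overline{\range(A)}$. By uniqueness of the nearest point in the Chebyshev subspace, this is equivalent to $Ax_0=\pr y$. So best approximate solutions exist exactly when $\pr y\in\range(A)$, that is, by item~(ii) of Proposition~\ref{prop:Ad}, exactly when $y\in\domain(\Ad)$. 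This already gives the final assertion: for $y\notin\domain(\Ad)$ there are no best approximate solutions, hence no minimum norm solutions. For $y\in\domain(\Ad)$, property~\eqref{it:AAM} gives $A\Ad y=\pr y$, which proves item~\eqref{it:xast}.

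\emph{Step 2 (minimum norm).} By Step~1, the set of best approximate solutions for $y\in\domain(\Ad)$ is the closed affine set $x_0+\N(A)$ with $x_0:=\Ad y$, and these are exactly the solutions of $Ax=\pr y$. Minimizing $\norm{x_0-n}{X}$ over $n\in\N(A)$ has the unique minimizer $n=\pi_{\N(A)}x_0$, because $\N(A)$ is Chebyshev. Thus the unique minimum norm solution is $(I_X-\pi_{\N(A)})x_0$. Combining the second property of Definition~\ref{def:Ad} with \eqref{it:AMA} yields
$$
\Ad y=\Ad A\Ad y=(I_X-\pi_{\N(A)})\Ad y ,
$$
so this unique minimum norm solution is $\Ad y$ itself. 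This proves item~\eqref{it:xast2}.

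\emph{Step 3 (conal characterization).} First, $\Ad y\in\range(\Ad)=C_X(A)=J_X^{-1}\lr{\N(A)^\perp}$. Second, $A$ is injective on $C_X(A)$ by Proposition~\ref{prop:decomposition}, so $\Ad y$ is the only solution of $Ax=\pr y$ lying in $C_X(A)$. To get $\pi_{\N(A)}\Ad y=0$, I will use the elementary estimate: if $J_Xx\in\N(A)^\perp$, then for every $n\in\N(A)$
$$
\norm{x}{X}^2=\scalar{J_Xx}{x}=\scalar{J_Xx}{x-n}\le\norm{x}{X}\norm{x-n}{X}.
$$
Hence $0$ is a nearest point of $\N(A)$ to $x$, and by uniqueness $\pi_{\N(A)}x=0$. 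Finally, $\pr y\in\range(A)\subset\domain(\Ad)$ and $\pr(\pr y)=\pr y$, since a point of $\overline{\range(A)}$ is its own nearest point. Therefore $\xast(y)$ and $\xast(\pr y)$ are both solutions of the same equation $Ax=\pr y$ in $C_X(A)$, and injectivity gives $\xast(y)=\xast(\pr y)$.

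\emph{Main obstacle.} The main care is needed in Step~1: the infimum over $\range(A)$ must equal the distance to the closure $\overline{\range(A)}$, and attainment must force $Ax_0=\pr y$. Both use only density of $\range(A)$ in its closure and uniqueness of the nearest point. Everything else follows from the defining identities of $\Ad$ in Definition~\ref{def:Ad}.
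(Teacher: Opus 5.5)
Your proof is correct, and it is more self-contained than the paper's treatment. The paper gives no argument of its own: it refers to the proof of \cite[Thm.~3.2]{MR2024990} and to the fact that $\pr y\notin\range(A)$ for $y\notin\domain(\Ad)$. Your Step~1 makes that last ingredient precise by showing that the best approximate solutions are exactly the solutions of $Ax=\pr y$, using density of $\range(A)$ in its closure plus uniqueness of the nearest point. Item~(ii) of Proposition~\ref{prop:Ad} then settles the nonexistence claim.

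For (i)--(iii) you argue directly from the four identities of Definition~\ref{def:Ad}, not from the explicit representation \eqref{eq:Ad-construct}. Combining $\Ad A\Ad=\Ad$ with \eqref{it:AMA} gives $\Ad y=(I_X-\pi_{\N(A)})\Ad y$. This identifies $\Ad y$ as the unique minimum norm element of $\Ad y+\N(A)$, using only the Chebyshev property of $\N(A)$.

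Had you started from \eqref{eq:Ad-construct} instead, the first part of (iii) would be immediate. Item (ii) and the identity $\pi_{\N(A)}\xast(y)=0$ would then hinge on elements of $C_X(A)$ being the minimum norm representatives of their cosets modulo $\N(A)$, which is exactly your duality estimate $\norm{x}{X}\le\norm{x-n}{X}$. Your axiomatic route reverses this order. As a by-product, it shows that any map obeying the identities of Definition~\ref{def:Ad} must return the minimum norm solution, so these identities determine $\Ad$ uniquely on $\domain(\Ad)$.

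Two small remarks on Step~3. Step~2 already yields $\pi_{\N(A)}\Ad y=0$, so the duality estimate is needed only for the general fact that $\pi_{\N(A)}$ vanishes on all of $C_X(A)$. Also, that estimate uses merely some $x^{\prime}\in J_X(x)\cap\N(A)^\perp$, so it does not depend on single-valuedness of $J_X$.
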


\section{Proof of Theorem~\ref{thm:dichotomyparam}}
\label{sec:proof}

As basic regularizing property we have, for all $x \in X$ and all $y \in Y$, that $ T_\alpha(x_\alpha(y),y) \le T_\alpha(x,y)$,  which means
\begin{equation} \label{eq:mingen}
\|Ax_\alpha(y)-y\|_Y^p +\alpha\|x_\alpha(y)\|^q_X \le  \|Ax-y\|_Y^p +\alpha\,\|x\|_X^q. 
\end{equation}
By omitting the second term on the left-hand side and applying the superior limit for $\alpha \to 0$, we derive from
\eqref{eq:mingen}, for all $x \in X$ and all $y \in Y$,  the inequality
\begin{equation} \label{eq:suplimgen}
\limsup_{\alpha \to 0} \|Ax_\alpha(y)-y\|_Y \le \|Ax-y\|_Y,    
\end{equation}
and consequently, for all $y \in Y$, the estimate
\begin{equation} \label{eq:supliminf}
\limsup_{\alpha \to 0} \|Ax_\alpha(y)-y\|_Y \le \inf_{x \in X}\|Ax-y\|_Y=\|\pr y -y\|_Y= {\rm dist}(y,\overline{\range(A)}). 
\end{equation}
Because we have, for all $\alpha>0$, that $Ax_\alpha(y) \in \range(A) \subset \overline{\range(A)}$, this gives
$$\;\,{\rm dist}(y,\overline{\range(A)}) \le \liminf_{\alpha \to 0}\|Ax_\alpha(y)-y\|_Y\;,
$$
and hence with \eqref{eq:supliminf} that the limit $\;\lim_{\alpha \to 0}\|Ax_\alpha(y)-y\|_Y\;$ exists for all $y \in Y$ 
and obeys the equalities
\begin{equation} \label{eq:disteq}
\lim_{\alpha \to 0}\|Ax_\alpha(y)-y\|_Y={\rm dist}(y,\overline{\range(A)})=\|\pr y -y\|_Y \qquad \forall y \in Y \,.
\end{equation}
Evidently, for every $y \in Y$, $Ax_\alpha(y)$ is bounded in norm for all regularization parameters $\alpha>0$, and thus there exist weak limits $z \in Y$ for sequences
$Ax_{\alpha_n}(y) \rightharpoonup z$. Since the set $\overline{\range(A)}$ is convex and closed, hence weakly closed in $Y$,
we have $z \in \overline{\range(A)}$. 
Furthermore $\|z-y\|_Y \le \lim_{n \to \infty}\|Ax_{\alpha_n}(y)-y\|_Y
={\rm dist}(y,\overline{\range(A)})$.
Since~$z \in \overline{\range(A)}$, we see that~$z=\pr y$. 

Summarizing, we have weak convergence $Ax_{\alpha_n}(y)-y \rightharpoonup \pr y-y$, and by~\eqref{eq:disteq} we also  have convergence of the norms
$\|Ax_{\alpha_n}(y)-y\|_Y \to \|\pr y-y\|_Y$ as $n \to \infty$. By the Radon-Riesz property this gives $\|Ax_\alpha(y) - \pr y\|_Y \to 0$ as $ \alpha \to 0$ for all $y \in Y$. The arguments above show the validity of~\eqref{eq:Anormcon}.

To establish~\eqref{eq:normcon}, we restrict our considerations to the case~$y \in \domain(\Ad)$. There we have a uniquely determined minimum norm solution $x_*(y)$ to any fixed $y \in \domain(\Ad)$ with $Ax_*(y)=\pr y$. Also, taking into account \eqref{eq:disteq} we find that 
$$\lim_{\alpha \to 0} \|Ax_\alpha(y)-y\|_Y =\|Ax_*(y)-y\|_Y={\rm dist}(y,\overline{\range(A)}).$$ 
For $x:=x_*(y)$, \eqref{eq:mingen} attains the form
\begin{equation} \label{eq:minprop}
\|Ax_\alpha(y)-y\|_Y^p + \alpha\|x_\alpha(y)\|^q_X  \le  \|Ax_*(y)-y\|_Y^p +\alpha\,\|x_*(y)\|_X^q,
\end{equation}
The assertion \eqref{eq:normcon} 
can be derived from \eqref{eq:minprop} by rewriting this as
$$ \alpha\|x_\alpha(y)\|^q_X  \le \left[ \|Ax_*(y)-y\|_Y^p - \|Ax_\alpha(y)-y\|_Y^p \right]  +\alpha\,\|x_*(y)\|_X^q. $$
For all $\alpha>0$, the difference in the square brackets is non-positive. This is
a consequence of  $$\|Ax_*(y)-y\|_Y={\rm dist}(y,\overline{\range(A)}) \le  \|Ax_\alpha(y)-y\|_Y,$$ 
which is due to $Ax_\alpha(y) \in \range(A) \subset \overline{\range(A)}$.
Consequently, we obtain that
\begin{equation} \label{eq:xnorm}
\|x_\alpha(y)\|_X \le \|x_*(y)\|_X \quad \mbox{for all} \;\; \alpha>0,
\end{equation}
which in turn yields that
$\limsup_{\alpha \to 0}\|x_\alpha(y)\|_X \le  \|x_*(y)\|_X $. Since $X$ is reflexive, and because~$A$ is weak-to-weak continuous, for any sequence $\alpha_n \to 0$, there is some element $z \in X$ such that weak limits in $X$ and $Y$, respectively, apply as 
$$x_{\alpha_n}(y) \rightharpoonup z \quad \mbox{and} \quad A x_{\alpha_n}(y) \rightharpoonup Az \quad \mbox{as} 
\quad n \to \infty. $$ Then the weak lower semi-continuity of the norm provides us with
$$
\|z\|_X \le \liminf_{n \to \infty} \|x_{\alpha_n}(y)\|_X \le  \limsup_{n \to \infty}  \|x_{\alpha_n}(y)\|_X   \le \|x_*(y)\|_X.
$$
In view of~\eqref{eq:disteq} we see that
$$
\|Az-y\|_Y \le \liminf_{n \to \infty} \|Ax_{\alpha_n}(y)-y\|_Y = {\rm dist}(y,\overline{\range(A)}).   
$$
Since $Az \in \range(A)$, we even have $\|Az-y\|_Y={\rm dist}(y,\overline{\range(A)})$. Thus, every such limit element
$z \in X$ is a best approximate solution, obeying $\|Az-y\|_Y=\min_{x \in X} \|Ax-y\|_Y$ and  
$\|z\|_X \le \|x_*(y)\|_X$. This has the consequence that the weak limit elements $z$, for any sequences of $\alpha$ tending to zero, coincide with the uniquely determined minimum norm solution $x_*(y)$. 
We aim at using that $X$ is a Radon-Riesz space.
Therefore, to complete the proof of~\eqref{eq:normcon}, we still have to show that $\lim_{\alpha_n \to 0} \|x_{\alpha_n}(y)\|_X \to \|x_*(y)\|_X$ as $n \to \infty$. However, the weak convergence $x_{\alpha_n}(y) \rightharpoonup x_*(y)$ as $ n\to \infty$ implies with \eqref{eq:xnorm} that 
$$\|x_*(y)\|_X \le \liminf_{n \to \infty} \|x_{\alpha_n}(y)\|_X \le  \limsup_{n \to \infty} \|x_{\alpha_n}(y)\|_X \le \|x_*(y)\|_X $$
and hence $\lim_{n \to \infty} \|x_{\alpha_n}(y)\|_X=\|x_*(y)\|_X$, which completes the proof.

\end{document}